\setlist[enumerate]{label=(\roman*)}
\newtheorem{theorem}{Theorem}[section]
\newtheorem{lemma}[theorem]{Lemma}
\theoremstyle{definition}
\newtheorem{definition}[theorem]{Definition}
\newtheorem{example}[theorem]{Example}
\theoremstyle{remark}
\numberwithin{equation}{section}
\newcommand{\textdef}{\textit}
\newcommand{\N}{\mathbb{N}}
\newcommand{\Q}{\mathbb{Q}}
\newcommand{\R}{\mathbb{R}}
\newcommand{\1}{\mathbbm{1}}	
\newcommand{\PV}{\mathbb{P}}	
\newcommand{\EV}{\mathbb{E}}	
\newcommand{\cC}{\mathcal{C}}	
\newcommand{\sE}{\mathscr{E}}	
\newcommand{\sF}{\mathscr{F}}	
\newcommand{\sG}{\mathscr{G}}	
\newcommand{\sS}{\mathscr{S}}	
\newcommand{\tE}{\widetilde{E}}
\newcommand{\tsE}{\widetilde{\sE}}
\newcommand{\hO}{\hat{\Omega}}
\newcommand{\hE}{\hat{E}}
\newcommand{\hsE}{\hat{\sE}}
\newcommand{\hsF}{\hat{\sF}}
\newcommand{\hPV}{\hat{\PV}}
\newcommand{\hEV}{\hat{\EV}}
\newcommand{\hT}{\hat{\Theta}}
\newcommand{\norm}[1]{\left\lVert #1 \right\rVert} 
\newcommand\restr[2]{{		  	
  \left.\kern-\nulldelimiterspace 
  #1 				  
  \vphantom{\big|}		  
  \right|_{#2}    		  
  }}
\begin{document}

\title{Concatenation and Pasting of Right Processes}

\author{Florian Werner}
\address{Institut f\"ur Mathematik, Universit\"at Mannheim, 68131 Mannheim, Germany}
\email{fwerner@math.uni-mannheim.de}


\subjclass[2000]{Primary 60J40, 60J45}

\date{\today}


\keywords{Markov processes, right processes, concatenation, pasting}

\begin{abstract}
  A universal method for the concatenation of a sequence of Markov right processes is established.
It is then applied to the continued pasting of two Markov right processes, which can be used for pathwise constructions
of locally defined processes like Brownian motions on compact intervals.
\end{abstract}

\maketitle

\section{Introduction}

\subsection{The Objective}

The concatenation of a sequence of (strong) Markov processes $(X^n, n \in \N)$ on state spaces $(E^n, n \in \N)$ forms a stochastic process~$X$ on~$\bigcup_{n \in \N} E^n$ as follows:
Started in~$E^n$, the process~$X$ behaves like~$X^n$ until this process dies, afterwards is revived as~$X^{n+1}$ at a point in~$E^{n+1}$ 
which is chosen by a probability measure which takes Markovian information of~$X^n$ until its death into account, then behaves like~$X^{n+1}$ until it dies, and so on.

In earlier works on Markov processes and their applications, 
 the theory of this technique, in contrast to other well-known modes of transformation like killing or time substitution,
 has not been developed much further---if at all---than on restricting it to special cases,
despite the fact that it is not at all trivial to show that the resulting process $X$ will inherit the (strong) Markov property of the subprocesses.
This gap in the literature is quite surprising, considering it is natural in manifold applications to construct processes via local solutions and pasting them together,
from immediate constructions of Markov chains and branching processes \cite{IkedaNagasawaWatanabe68},
extending Markov processes over their lifetime by instant revivals \cite{Meyer75},
introduction of isolated jump discontinuities into diffusion processes, 
up to the pathwise construction of stochastic processes via local solution techniques such as in the construction
of Brownian motions on intervals~\cite{ItoMcKean63,Knight81} or on metric graphs \cite{KPS12,FitzsimmonsKuterGraph,Werner16}.

In this paper, we are establishing the technique of concatenation of countably many processes in the general context of right processes \cite{Sharpe88}. 
This class of strong Markov processes encompasses a majority of classical types of Markov processes,
such as Feller, Hunt, standard, and---in some sense \cite{Getoor75}---even Ray processes.
Our main result will guarantee that the process constructed by the concatenation of a sequence of right processes on disjoint state spaces 
via transfer kernels will again be a right process, thus especially maintaining the strong Markov property of its subprocesses. 
This generalizes~\cite{Sharpe88} from two to countably many processes.
We will then weaken the assumption on the disjointedness of the state spaces to the concatenation of alternating copies of two right processes 
by imposing some consistency conditions on both partial processes. 
This method can be used to glue two Markov processes on not necessarily disjoint state spaces together, extending a result of \cite{Nagasawa76},
or to form instant revival processes in the sense of \cite{IkedaNagasawaWatanabe66,Meyer75}.
We thus provide an unified way to extend or join an extensive class of Markov processes.

\subsection{The Context: Markov Right Processes \& Strong Markov Property}

We understand a Markov process $X$ on a Radon space $E$ (equipped with a $\sigma$-algebra~$\sE$)  to be defined   
in the canonical sense of the standard works of Dynkin~\cite{Dynkin65}, Blumenthal--Getoor~\cite{BlumenthalGetoor69} and Sharpe~\cite{Sharpe88},
that is, as a sextuple 
 \begin{align*}
  X = \big( \Omega, \sG, (\sG_t, t \geq 0), (X_t, t \geq 0), (\Theta_t, t \geq 0), (\PV_x, x \in E) \big)
 \end{align*}
with the following properties:
$(X_t, t \geq 0)$ is a right continuous, $E$-valued stochastic process on the measurable space $(\Omega, \sG)$,
adapted to the filtration $(\sG_t, t \geq 0)$, and equipped with shift operators $(\Theta_t, t \geq 0)$ on $\Omega$.
$(\PV_x, x \in E)$ is a family of probability measures satisfying $X_0 = x$ $\PV_x$-a.s.\ for all $x \in E$ (normality of the process),
such that for all $t \geq 0$, $B \in \sE$, $x \mapsto \PV_x(X_t \in B)$ is measurable
and the Markov property holds:\footnote{For any $\sigma$-algebra $\sE$, we define $b\sE$, $p\sE$ to be
  the sets of all $\sE$-measurable functions which are bounded, non-negative respectively, 
  as well as $bp\sE := b\sE \cap p\sE$.}\textsuperscript{,}\footnote{For convenience,
  we omit the qualifier ``a.s.'' in equations containing conditional expectations.}
 \begin{align*}
  \forall x \in E, s, t \geq 0, f \in b\sE: \quad \EV_x \big( f(X_{s+t}) \,\big|\, \sG_s \big) = \EV_{X_s} \big( f(X_t) \big).
 \end{align*}
 
We are basing our results in the context of one of the most general classes of Markov processes, namely the class of right processes.
Right processes are Markov processes which satisfy the following condition of right continuity in the topology of excessive functions:
For $\alpha \geq 0$, the class $\sS_\alpha$ of $\alpha$-excessive functions is the set of all non-negative, measurable functions
which satisfy $e^{-\alpha t} \, T_t f \uparrow f$ pointwise as~$t \downarrow 0$, with $(T_t, t \geq 0)$ being the semigroup associated to $X$, 
that is
 \begin{align*}
  T_t f(x) := \EV_x \big( f(X_t) \big), \quad f \in p\sE \cup b\sE, x \in E.
 \end{align*}
Then a Markov process $X$, equipped with an augmented and right continuous filtration, is called right process,
if it satisfies 
 \begin{align} \label{eq:HD2}
   \text{for all $\alpha > 0$, $f \in \sS_\alpha$, the map $t \mapsto f(X_t)$ is a.s.\ right continuous.} \tag{HD2}
 \end{align}
It is well-known (see \cite[Theorem 7.4]{Sharpe88}) that in order to establish \eqref{eq:HD2}, it is sufficient to check
the right continuity of the process on the $\alpha$-potentials ${(U_\alpha, \alpha > 0)}$
 \begin{align*}
  U_\alpha f(x) := \int_0^\infty e^{-\alpha t} \, T_t f(x) \, dt = \EV_x \Big( \int_0^\infty  e^{-\alpha t} \, f(X_t) \, dt \Big), \quad f \in p\sE \cup b\sE, x \in E,
 \end{align*}
of bounded, uniformly continuous functions\footnote{The set of all bounded and uniformly continuous functions on $E$ is denoted by $b\cC_d(E)$.} on $E$. 
Furthermore, \eqref{eq:HD2} implies the strong Markov property of the process [loc.~cit.], that is, for every $(\sG_t, t \geq 0)$-stopping time $\tau$, 
with $\sF$ being the universal completion of $\sigma(X_s, s \geq 0)$:
 \begin{align*}
    \forall x \in E, Y \in b\sF: \quad \EV_x \big( Y \circ \Theta_\tau \, \1_{\{\tau < \infty\}} \,\big|\, \sG_{\tau+} \big) = \EV_{X_\tau} \big( Y \big) \, \1_{\{\tau < \infty\}}.
 \end{align*}
The strong Markov property is oftentimes crucial for the examination of stochastic processes, 
 in particular it allows to decompose the resolvent of a strong Markov process~$X$ at stopping times $\tau$ via Dynkin's formula \cite[Section 5.1]{Dynkin65}:
 \begin{align}\label{eq:Dynkins formula (resolvent)}
  U_\alpha f(x) = \EV_x \Big( \int_0^\tau  e^{-\alpha t} \, f(X_t) \, dt \Big) + \EV_x \big( e^{-\alpha \tau} \, U_\alpha f(X_\tau) \, \1_{\{\tau < \infty\}} \big).
 \end{align}

We impose the usual hypotheses 
(cf.\ \cite[Sections 3--8,~11,~A1]{Sharpe88}): 
$\sE$ is the universal completion of the Borel $\sigma$-algebra on $E$,
the underlying filtration $(\sG_t, t \geq 0)$ is augmented and right continuous, and there exists an
isolated, absorbing cemetery state $\Delta \in E$, such that with the lifetime of the process
  \begin{align*}
   \zeta := \inf \{ t \geq 0: X_t = \Delta \},
  \end{align*}
$X_{t} = \Delta$ holds for all $t \geq \zeta$. Furthermore, there is a dead path $[\Delta] \in \Omega$ with $\zeta([\Delta]) = 0$, and we constitute that $f(\Delta) = 0$
for any measurable function $f$, which in conjunction with $X_{\infty} := \Delta$, $\Theta_{\infty} := [\Delta]$ allows to drop the restricting functions $\1_{\{\tau < \infty\}}$ in the above formulas
of the strong Markov property.

\subsection{Concatenation of Processes: Construction Approach \& Main Result} \label{sec:intro concatenation}

 \begin{figure}[tb] 
  \centering
  \input{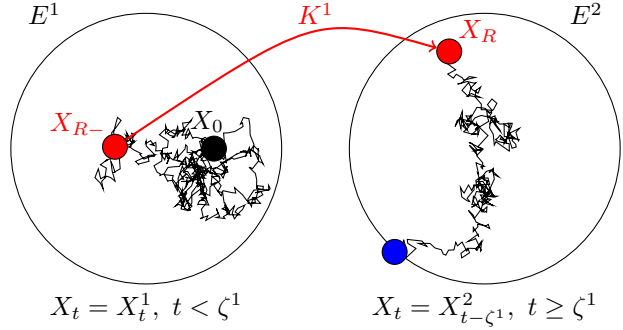}
  \caption[Concatenation of two processes]
           {Concatenation of two processes $X^1$ and $X^2$ on $E^1$, $E^2$, resulting in the process $X$, which, if started in $E^1$, behaves like $X^1$ until $R = \zeta^1$,
            afterwards is revived on some point in $E^2$ (chosen by a transfer kernel $K^1$), where it then runs like $X^2$.} \label{fig:concatenation of 2}
 \end{figure}
 
Let ${(X^n, n \in \N)}$ be a sequence of right processes on disjoint spaces $(E^n, n \in \N)$.
For the pathwise definition of a concatenating process $X$ on $\Omega := \prod_{n \in \N} \Omega^n$, 
we set, for $\omega := (\omega^n, n \in \N) \in \Omega$, $t \geq 0$, 
 \begin{align*}
  X_t(\omega) :=
    \begin{cases}
      X^1_t(\omega^1),				 & t < \zeta^{1}(\omega^1), \\
      X^2_{t - \zeta^{1}(\omega^1)}(\omega^2),		 & \zeta^{1}(\omega^1) \leq t < \zeta^{1}(\omega^1) + \zeta^{2}(\omega^2), \\
      X^3_{t - \left(\zeta^{1}(\omega^1) + \zeta^{2}(\omega^2)\right)}(\omega^3),		 & \zeta^{1}(\omega^1) + \zeta^{2}(\omega^2) \leq t < \zeta^{1}(\omega^1) + \cdots + \zeta^{3}(\omega^3), \\
      ~ \vdots					 & ~ \vdots \\
      \Delta,  & t \geq \sum_{n \in \N}\zeta^{n}(\omega^n).
    \end{cases}
 \end{align*}
 
In order to define initial measures $(\PV_x, x \in E)$ for the process $X$, we need to constitute a transfer mechanism between the
subprocesses $(X^n, n \in \N)$, more precisely: a law on how the process $X^{n+1}$ initiates in $E^{n+1}$ after $X^n$ died.
This mechanism can depend on all information until the lifetime $\zeta^n$ of the subprocess $X^n$,
but it should admit a memoryless property in order to ensure the Markov property of the resulting process $X$.
The main principle which allows to salvage the Markov property is the following invariance under time shifts:

\begin{definition} \label{def:left germ field}
 For a right process $X$ on $E$ and a terminal time $T$ for $X$,
 the \textdef{left germ field} $\sF_{[T-]}$ for $X$ at $T$ consists of all $\sF_{T-}$-measurable random variables $H$ 
 which satisfy
  \begin{align*}
    \forall t \geq 0: \quad   H \circ \Theta_t = H  \text{ a.s.\ on $\{t < T\}$}.
  \end{align*}
\end{definition}

Here, terminal times are a well-known concept for memoryless stopping times:

\begin{definition} \label{def:A_SM:terminal time}
 A stopping time $T$ over $(\sF_t, t \geq 0)$ is a \textdef{terminal time} for a Markov process $X$,
 provided that
  \begin{align*}
    t + T \circ \Theta_t = T \quad \text{on $\{t < T\}$}.
  \end{align*}
\end{definition}

The prime examples for terminal times are the first entrance times. 
Most notably, the lifetime $\zeta$ of a right process is always a terminal time. 
As $\Delta$ is absorbing, we even have a stronger version of shift invariance of $\zeta$ for any random time $R$:
\begin{align} \label{eq: lifetime shift}
 \zeta \circ \Theta_R = (\zeta - R) \vee 0.
\end{align}

The revival information is then encoded in kernels which are memoryless with respect to 
the lifetimes of the partial processes:

\begin{definition} \label{def:transfer kernel}
 Let $X^1$, $X^2$ be right processes on $E^1$, $E^2$ respectively.
 $K$ is a \textdef{transfer kernel} from $X^1$ to $(X^2, E^2)$, 
 if it is a probability kernel from $(\Omega^1, \sF^1_{[\zeta^1-]})$ to $(E^2, \sE^2)$.
\end{definition}

With the help of transfer kernels $K^n$ from $X^n$ to $(X^{n+1}, E^{n+1})$, the paths of the concatenated process are chosen for any $x \in E^n$, $n \in \N$,
by the initial measure
 \begin{align*}
   & \PV_x(d\omega^1, \ldots, d\omega^{n-1}, d\omega^n, d\omega^{n+1}, \ldots) \\
   & := \delta_{[\Delta^1]}(d\omega^1) \cdots \delta_{[\Delta^{n-1}]}(d\omega^{n-1}) \,
      \PV^n_x(d\omega^n) \, K^n(\omega^n, dx^{n+1}) \,
      \PV^{n+1}_{x^{n+1}}(d\omega^{n+1}) \cdots
 \end{align*}
with $\delta_{[\Delta^i]}$, being the Dirac-measure in $[\Delta^i]$, ensuring that $X$ starts $\PV_x$-a.s.\ in $E^n$.

Our main result on the concatenation of countably many right processes, which extends the concatenation of two processes given in \cite[Section~14]{Sharpe88}, is as follows:

\begin{theorem} \label{theo:concatenation countable}
 Let $(X^n, n \in \N)$ be a sequence of right processes on disjoint spaces $(E^n, n \in \N)$, such that the topological union
 $E := \bigcup_{n \in \N} E^n$ is a Radon space, and let a transfer kernel $K^n$ from $X^n$ to $(X^{n+1}, E^{n+1})$ be given for each $n \in \N$.
 Then the concatenation $X$ of the processes $(X^n, n \in \N)$ via the transfer kernels $(K^n, n \in \N)$
 is a right process on $E$. With $R^n := \inf \{ t \geq 0: X_t \in E^{n+1} \}$, for all $n \in \N$, $x \in \bigcup_{j=1}^n E^j$, $f \in b\sE^{n+1}$,
  \begin{align*}
   \EV_x \big( f(X_{R^n}) \, \1_{\{R^n < \infty \}} \, \big| \, \sF_{R^n-} \big) = K^n f \circ \pi^n \, \1_{\{R^n < \infty \}}. 
  \end{align*}
\end{theorem}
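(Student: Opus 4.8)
The plan is to lift the two-process concatenation theorem \cite[Section~14]{Sharpe88} to countably many factors by an induction over finite concatenations, followed by a localization in time. Writing $X^{[1,n]}$ for the concatenation of $X^1,\dots,X^n$ that is sent to $\Delta$ once $X^n$ dies, the pathwise construction is manifestly associative, so $X^{[1,n]}$ may be read as the two-process concatenation of $X^1$ with $X^{[2,n]}$ via the unmodified kernel $K^1$. Since $K^1$ maps into $E^2\subseteq\bigcup_{j=2}^n E^j$ and is already a transfer kernel in the sense of Definition~\ref{def:transfer kernel}, and since $\bigcup_{j=1}^n E^j$ is Radon as a subspace of the Radon space $E$, I would argue by induction on the number of factors: the base case of a single process holds by hypothesis, and in the inductive step $X^{[2,n]}$ is a concatenation of $n-1$ right processes, hence a right process by relabeling, so \cite[Section~14]{Sharpe88} shows that $X^{[1,n]}$ is a right process on $\bigcup_{j=1}^n E^j$ whose lifetime equals the entrance time $R^n$ of $X$ into $E^{n+1}$.

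For the revival identity I would instead read the same finite process, again by associativity, as the two-process concatenation $X^{[1,n+1]} = X^{[1,n]}\,\oplus\,X^{n+1}$, revived at the lifetime $R^n$ of $X^{[1,n]}$ through the lifted kernel $\tilde K^n := K^n\circ\pi^n$. The main obstacle is precisely to verify that $\tilde K^n$ is a legitimate transfer kernel from $X^{[1,n]}$ to $(X^{n+1},E^{n+1})$, i.e.\ that $K^nf\circ\pi^n$ lies in the left germ field $\sF^{[1,n]}_{[R^n-]}$ of the coarse process in the sense of Definition~\ref{def:left germ field}: it must be $\sF^{[1,n]}_{R^n-}$-measurable and satisfy $(K^nf\circ\pi^n)\circ\Theta_t = K^nf\circ\pi^n$ on $\{t<R^n\}$. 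The measurability reflects that the pre-lifetime germ of the last component is retained inside the germ field of the whole concatenation, while the shift invariance is inherited from the defining germ-field property of $K^n$ for $X^n$; the bookkeeping of how the shifts $\Theta_t$ of the coarse process act on the component's germ at $\zeta^n$ is governed by the terminal-time property of the component lifetimes together with the lifetime-shift relation \eqref{eq: lifetime shift}. Granting this, the two-process revival formula of \cite[Section~14]{Sharpe88}, applied at the lifetime $R^n$, yields $\EV_x\big(f(X^{[1,n+1]}_{R^n})\,\1_{\{R^n<\infty\}}\,\big|\,\sF^{[1,n+1]}_{R^n-}\big) = K^nf\circ\pi^n\,\1_{\{R^n<\infty\}}$.

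It remains to pass to the infinite concatenation $X$ and transport these statements to it. Here I would use that $X$ and $X^{[1,n]}$ have identical paths on $[0,R^n)$, since before entering $E^{n+1}$ the infinite concatenation only involves $X^1,\dots,X^n$, and that $R^n\uparrow\zeta=\sum_j\zeta^j$ with $X_t=\Delta$ for $t\geq\zeta$. As any $t<\zeta$ satisfies $t<R^N$ for some finite $N$, only finitely many revivals precede it, so the right-continuity condition \eqref{eq:HD2} for $X$ — which by \cite[Theorem 7.4]{Sharpe88} need only be tested on $\alpha$-potentials of bounded uniformly continuous functions — follows on each $[0,R^N)$ from the finite case and is trivial on $[\zeta,\infty)$, where the path is already absorbed at $\Delta$. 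Normality, the measurability of $x\mapsto\PV_x$, and the Markov property descend from the construction of the initial measures $\PV_x$ and the corresponding properties of the $X^{[1,N]}$, localized at $R^N$ and sent to the limit. Finally, on $\{R^n<\infty\}$ the germ fields $\sF_{R^n-}$ and $\sF^{[1,n+1]}_{R^n-}$ coincide, because the two processes agree up to and including the revival at $R^n\leq R^{n+1}$; this transports the revival identity from $X^{[1,n+1]}$ to $X$ and completes the proof.
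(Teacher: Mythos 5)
Your finite-stage induction is sound, and your right-to-left use of associativity — reading $X^{[1,n]}$ as the two-process concatenation of $X^1$ with $X^{[2,n]}$ via the \emph{unmodified} kernel $K^1$ — is a genuinely slicker route to the right-process statement than the paper's induction, which concatenates left-to-right ($X^{(n)}$ with $X^{n+1}$) and therefore needs the kernel-lifting lemma already at this stage. Note, however, that for the revival identity you must switch back to the left-to-right reading, and there your ``granting this'' conceals exactly the technical heart of the finite case: the verification that $K^n \circ \pi^n$ is a transfer kernel from $X^{[1,n]}$, i.e.\ Lemma~\ref{lem:lift of transfer kernel, finite}, whose measurability half is not routine bookkeeping but rests on the separate Lemma~\ref{lem:random shift is measurable II} (that $\Theta_S$ is $\sF_{T-}/\sF_{T-}$-measurable when $S + T\circ\Theta_S = T$, applied with $S = \zeta^{(n-1)}$, $T = \zeta^{(n)}$, which in turn needs the strong Markov property of $X^{[1,n]}$ — fortunately available from your induction before the kernel is needed). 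Your sketch names the right ingredients, so this part is a fillable omission rather than an error.

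The genuine gap is in the passage to the countable concatenation. Condition \eqref{eq:HD2} for $X$ must be tested on $t \mapsto U_\alpha f(X_t)$, where $U_\alpha$ is the resolvent of the \emph{infinite} process. On $[0, R^N)$ the path agrees with $X^{[1,N]}$, but HD2 for $X^{[1,N]}$ only gives right continuity of $g(X^{[1,N]}_t)$ for $g$ that is $\alpha$-excessive \emph{for $X^{[1,N]}$}, and $U_\alpha f$ restricted to $E^{(N)}$ is not a potential of $X^{[1,N]}$: it carries the post-$R^N$ contribution $\EV_{\cdot}\big( e^{-\alpha R^N} \, U_\alpha f(X_{R^N}) \big)$. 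So ``follows on each $[0,R^N)$ from the finite case'' is a non sequitur as stated. This is precisely the hole the paper's Lemma~\ref{lem:right process if subprocesses are right} closes: every $\alpha$-excessive function of $X$, written as $f = \sup_m U_\alpha h_m$, restricts on $E^{(N)}$ to an $\alpha$-excessive function of the killed process $X^{R,N}$ — proved by showing $e^{-\alpha t}\, T^{N}_t\, U_\alpha h_m \leq U_\alpha h_m$ via the Markov property of $X$ and $\{t < R^N\} \in \sF_t$, then using $R^N > 0$ a.s.\ and monotone convergence. (Alternatively, decompose $U_\alpha f$ at $R^N$ by Dynkin's formula \eqref{eq:Dynkins formula (resolvent)} and check that the second summand is excessive for $X^{[1,N]}$ via the shift invariance of the lifted kernel, as in the sketch of Theorem~\ref{theo:concatenation two}.) Either way, an argument is required and your proposal contains none. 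Two smaller points: the Markov property of $X$ also does not simply ``descend'' — the paper obtains it by a limit argument using the terminal-time and stopping-time properties of $R^n$ together with dominated convergence; and $\sF_{R^n-}$ and $\sF^{[1,n+1]}_{R^{(n)}-}$ live on different sample spaces, so they cannot literally coincide — the correct (and the paper's) statement is that their multiplicative generators correspond under $\pi^{(n+1)}$, which suffices to transport the conditional expectation.
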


A standard method of constructing transfer kernels is by imposing conditional distributions $k^1(x, \,\cdot\,)$ for the 
transfer point (that is the ``revival point'' of $X^2$) given the ``exit point'' $X^1_{\zeta^1-} = x$ of $X^1$ (cf.~\cite[p.~78]{Sharpe88}):

\begin{example} \label{ex:revival kernel via killing distribution}
 Let $X^1$, $X^2$ be right processes on $E^1$, $E^2$ respectively,
 such that  $X^1_{\zeta^1-}$ exists a.s.\ in $E^1$,
 and let $k^1 \colon E^1 \times \sE^2 \rightarrow [0,1]$ be a probability kernel from $(E^1, \sE^1)$ to $(E^2, \sE^2)$.
 Then the map $K^1 \colon \Omega^1 \times \sE^2 \rightarrow [0,1]$ with
  \begin{align*}
   K^1(\omega^1, A) := k^1 \big( X^1_{\zeta^1-}(\omega^1), A \big), \quad \omega \in \Omega^1, A \in \sE^2,
  \end{align*}
 defines a transfer kernel from $X^1$ to $(X^2, E^2)$.
\end{example}

\subsection{Pasting of Two Processes: Construction Approach \& Main Result} \label{sec:intro pasting}

It is possible to weaken the assumption of disjoint subspaces $(E^n, n \in \N)$,
in order to apply the above described technique to paste together two right processes.
However, we then need to impose additional conditions on the subprocesses, namely,
they need to coincide on the shared state space, and their entry and exit distributions into this subset
must be equal irrespective of the mode of entry or exit (namely by either subprocess behavior or revival), see~figure~\ref{fig:pasting consistency}.

\begin{figure}[tb]
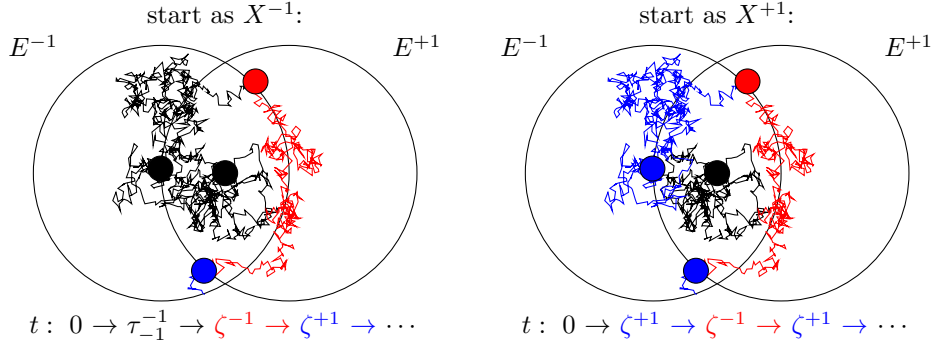
 
\begin{minipage}{\textwidth}
\begingroup
\parfillskip=0pt
\begin{minipage}[t]{0.483\textwidth}
  \centering
  start as $X^{-1}$:
  
  \input{C_CO_altern_A.tex}
  $t: ~ 0$ 
  $\rightarrow$ $\tau^{-1}_{-1}$ 
  $\rightarrow$ 
  {\color{red}$\zeta^{-1}$ $\rightarrow$}
  {\color{blue}$\zeta^{+1}$ $\rightarrow$} 
   $\cdots$
\end{minipage}%
\hfill
\begin{minipage}[t]{0.483\textwidth}
  \centering
  start as $X^{+1}$:
  
  \input{C_CO_altern_B.tex}
  $t: ~ 0$ 
  $\rightarrow$ 
  {\color{blue}$\zeta^{+1}$ $\rightarrow$}
  {\color{red}$\zeta^{-1}$ $\rightarrow$}
  {\color{blue}$\zeta^{+1}$ $\rightarrow$}
  $\cdots$
\end{minipage}%
\par\endgroup
\end{minipage}
\caption[Consistency condition for pasting two processes together]
        {Consistency condition for pasting together two processes $X^{-1}$, $X^{+1}$ on a common state space:
         The process behavior must be independent of the chosen starting process. 
         The left-hand picture shows a path behavior if the concatenated process is started as $X^{-1}$ (black), which is then revived after its death at $\zeta^{-1}$
         as $X^{+1}$ (red), afterwards revived as $X^{-1}$ at $\zeta^{+1}$ (blue), etc. 
        The concatenated process must show the same behavior if started as $X^{+1}$, as illustrated in the right-hand picture.} \label{fig:pasting consistency}
\end{figure}

Let $X^{-1}$, $X^{+1}$ be two right processes with lifetimes $\zeta^{-1}$, $\zeta^{+1}$ on $E^{-1}$, $E^{+1}$ respectively, 
and $K^{-1}$, $K^{+1}$ be transfer kernels from $X^{-1}$ to $(X^{+1}, E^{+1})$ and from $X^{+1}$ to $(X^{-1}, E^{-1})$. 
We define alternating copies of these processes and transfer kernels on disjoint state spaces by setting for each $n \in \N$
 \begin{align}\label{eq:pasting processes}
  X^n := \{n\} \times X^{(-1)^n}, \quad K^n := \delta_{n+1} \otimes K^{(-1)^n}.
 \end{align}
Then $X^n$ is a right process on $E^n := \{n\} \times E^{(-1)^n}$, $\sE^n = \{n\} \otimes \sE^{(-1)^n}$,
and $K^n$ is a transfer kernel from $X^n$ to $(X^{n+1}, E^{n+1})$.
Let $X$ be the concatenation of $(X^n, n \in \N)$ via the transfer kernels $(K^n, n \in \N)$. 
By Theorem~\ref{theo:concatenation countable}, it is a right process on $\tE = \bigcup_{n \in \N} E^n$,
equipped with the universal measurable sets $\tsE$.

Set $E := E^{-1} \cup E^{+1}$, and let $\pi \colon \tE \rightarrow E$ be the canonical projection onto the second coordinate.
The consistency conditions which ensure the pasted process~$\pi(X)$ to be a right process on $E$ are as follows:

\begin{theorem} \label{theo:alternating iterations}
 Let $X^{-1}$, $X^{+1}$ be right processes on spaces $E^{-1}$, $E^{+1}$ respectively, and $X$ be concatenation of $(X^n, n \in \N)$ via $(K^n, n \in \N)$, as defined 
 in~\eqref{eq:pasting processes}.
 Let $\tau^{-1}_{-1}$ be the first entry time of $X^{-1}$ into $E^{-1} \backslash E^{+1}$, 
 and $\tau^{+1}_{+1}$ be the first entry time of $X^{+1}$ into $E^{+1} \backslash E^{-1}$.
 If for all $x \in E^{-1} \cap E^{+1}$, $f \in b\sE$, $g^{-1} \in b\sE^{-1}$, $g^{+1} \in b\sE^{+1}$, the equalities
 \begin{enumerate}
  \item $   \EV^{-1}_x \big( \int_0^{\tau^{-1}_{-1}} e^{-\alpha t} \, f(X^{-1}_t) \, dt \big) 
          = \EV^{+1}_x \big( \int_0^{\tau^{+1}_{+1}} e^{-\alpha t} \, f(X^{+1}_t) \, dt \big) $,  \label{itm:alternating iterations  i}
  \item $   \EV^{-1}_x \big( e^{-\alpha \tau^{-1}_{-1}} \, g^{-1}(X^{-1}_{\tau^{-1}_{-1}}); \, \tau^{-1}_{-1} < \zeta^{-1} \big) 
          = \EV^{+1}_x \big( e^{-\alpha \zeta^{+1}} \, K^{+1} g^{-1};                      \, \zeta^{+1} < \tau^{+1}_{+1} \big) $, \\
        $   \EV^{+1}_x \big( e^{-\alpha \tau^{+1}_{+1}} \, g^{+1}(X^{+1}_{\tau^{+1}_{+1}}); \, \tau^{+1}_{+1} < \zeta^{+1} \big) 
          = \EV^{-1}_x \big( e^{-\alpha \zeta^{-1}} \, K^{-1} g^{+1};                      \, \zeta^{-1} < \tau^{-1}_{-1} \big) $  \label{itm:alternating iterations  ii}
 \end{enumerate}
 hold true, 
 then $\pi(X)$ is a right process on $E$, with $\pi \colon \tE \rightarrow E$ for $\tE = \bigcup_n \{n\} \times E^{(-1)^n}$, $E = E^{-1} \cup E^{+1}$.
\end{theorem}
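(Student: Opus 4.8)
The plan is to realize the pasted process $\pi(X)$ as a measurable image of the concatenated right process $X$ on $\tE$ (which is a right process by Theorem~\ref{theo:concatenation countable}), and to transport the right-process property along $\pi$. Writing $U_\alpha$ for the resolvent of $X$ on $\tE$, the whole argument reduces to a single \emph{fiber-constancy} statement: for every $f \in b\sE$ and $\alpha > 0$, the function $U_\alpha(f \circ \pi)$ on $\tE$ depends on $(n,x) \in E^n$ only through $x$. Granting this, define $\bar U_\alpha f$ on $E$ by $\bar U_\alpha f(x) := U_\alpha(f\circ\pi)(n,x)$ for any admissible $n$; since the fiber-constant functions on $\tE$ are exactly those of the form $g\circ\pi$, the resolvent $U_\alpha$ maps this class into itself, and by uniqueness of Laplace transforms so does the transition semigroup of $X$. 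This is the classical lumping criterion for a function of a Markov process, and it yields that $\pi(X)$ is Markov on $E$ with resolvent $\bar U_\alpha$; normality and measurability are immediate. For \eqref{eq:HD2} I would invoke \cite[Theorem 7.4]{Sharpe88}: it suffices that $t \mapsto \bar U_\alpha g(\pi(X_t))$ be a.s.\ right continuous for $g \in b\cC_d(E)$, and since $\bar U_\alpha g(\pi(X_t)) = U_\alpha(g\circ\pi)(X_t)$ with $g\circ\pi \in b\cC_d(\tE)$ (using that $\pi$ is uniformly continuous and $E$ is Radon), this is exactly the right continuity already guaranteed for the right process $X$.

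So the real content is the fiber-constancy of $U_\alpha(f\circ\pi)$. First I would record a homogeneity observation coming purely from the period-two definition \eqref{eq:pasting processes}: starting $X$ at the copy $(n,z)$ of a point and watching it from that cell onward is, after projection by $\pi$, the same as starting afresh as $X^{-1}$ (if $n$ is odd) or as $X^{+1}$ (if $n$ is even) at $z$, because the sequence of subprocess types and transfer kernels from cell $n$ onward is identical to the one from cell $1$ or $2$. Hence $U_\alpha(f\circ\pi)(n,z)$ equals $u^-(z) := U_\alpha(f\circ\pi)(1,z)$ for all odd $n$ and $u^+(z) := U_\alpha(f\circ\pi)(2,z)$ for all even $n$. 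Consequently every fiber of $\pi$ is constant as soon as $u^-(x) = u^+(x)$ for all $x \in E^{-1}\cap E^{+1}$, and this identity on the overlap is the only thing left to prove. Note that $u^-\in b\sE^{-1}$ and $u^+\in b\sE^{+1}$ are bounded by $\norm{f}_\infty/\alpha$, hence are admissible as the test functions $g^{-1}$, $g^{+1}$ below.

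The key step is a one-step renewal decomposition of $u^{\mp}$ obtained from the strong Markov property of $X$ at the stopping time $\tau^{-1}_{-1}\wedge\zeta^{-1}$, together with the transfer formula of Theorem~\ref{theo:concatenation countable} at the first revival $R^1 = \zeta^{-1}$. For $x \in E^{-1}\cap E^{+1}$ this gives, using $f(\Delta)=0$,
\[
 u^-(x) = \EV^{-1}_x\Big(\int_0^{\tau^{-1}_{-1}} e^{-\alpha t} f(X^{-1}_t)\,dt\Big)
  + \EV^{-1}_x\big(e^{-\alpha\tau^{-1}_{-1}}\,u^-(X^{-1}_{\tau^{-1}_{-1}});\,\tau^{-1}_{-1}<\zeta^{-1}\big)
  + \EV^{-1}_x\big(e^{-\alpha\zeta^{-1}}\,K^{-1}u^+;\,\zeta^{-1}<\tau^{-1}_{-1}\big),
\]
and symmetrically
\[
 u^+(x) = \EV^{+1}_x\Big(\int_0^{\tau^{+1}_{+1}} e^{-\alpha t} f(X^{+1}_t)\,dt\Big)
  + \EV^{+1}_x\big(e^{-\alpha\tau^{+1}_{+1}}\,u^+(X^{+1}_{\tau^{+1}_{+1}});\,\tau^{+1}_{+1}<\zeta^{+1}\big)
  + \EV^{+1}_x\big(e^{-\alpha\zeta^{+1}}\,K^{+1}u^-;\,\zeta^{+1}<\tau^{+1}_{+1}\big).
\]
The three terms correspond to the discounted occupation of the overlap before it is left, a genuine exit into the pure part $E^{-1}\setminus E^{+1}$ (resp.\ $E^{+1}\setminus E^{-1}$), and death inside the overlap followed by revival; in each case the continuation is identified with a fresh $X^{\mp}$-start via the homogeneity observation. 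Now I would subtract the two identities and match term by term: the occupation terms agree by condition~\ref{itm:alternating iterations  i}; plugging $g^{-1}=u^-$ into the first equality of condition~\ref{itm:alternating iterations  ii} equates the exit term of $u^-$ with the revival term of $u^+$; and plugging $g^{+1}=u^+$ into the second equality equates the exit term of $u^+$ with the revival term of $u^-$. Thus the six contributions cancel in pairs and $u^-(x)=u^+(x)$ on the overlap. Crucially this is a \emph{direct substitution}, not a fixed-point argument, so there is no circularity.

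The main obstacle I anticipate is the rigorous justification of the two renewal decompositions. One must apply the strong Markov property of the concatenation at $\tau^{\mp}_{\mp}\wedge\zeta^{\mp}$, route the post-revival continuation through the transfer formula $\EV_x(\,\cdot \mid \sF_{R^1-}) = K^{-1}(\,\cdot\,)\circ\pi^1$ of Theorem~\ref{theo:concatenation countable} (checking the germ-field measurability of $K^{-1}$ from Definition~\ref{def:transfer kernel}), and verify that the continuation after a pure-region exit or after a revival really is, under $\pi$, a fresh start of the appropriate subprocess --- this is precisely the period-two homogeneity of \eqref{eq:pasting processes}. Once these structural identities are in place, the consistency conditions enter only as the final bookkeeping. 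Secondary technical points to settle are the coincidence event $\{\tau^{\mp}_{\mp}=\zeta^{\mp}\}$ (to align the strict inequalities appearing in condition~\ref{itm:alternating iterations  ii}) and the topological hypotheses making $\pi$ uniformly continuous and $E$ a Radon space, which are needed so that $b\cC_d(E)\circ\pi\subseteq b\cC_d(\tE)$ in the verification of \eqref{eq:HD2}.
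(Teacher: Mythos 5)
Your proposal is correct and takes essentially the same route as the paper: reduce via the mapping criterion (Theorem~\ref{theo:mapping, resolvent condition}) to fiber-constancy of $U_\alpha(f\circ\pi)$, establish that the resolvent depends only on the parity of the cell (your $u^-$, $u^+$ are the paper's $g^{-1}$, $g^{+1}$), derive the three-term occupation/exit/revival decomposition at the overlap-exit time, and conclude $u^-=u^+$ on $E^{-1}\cap E^{+1}$ by direct substitution of these bounded functions into hypotheses (i) and (ii) --- your two displayed identities are exactly the paper's \eqref{eq:alternating copies, decomp1} and \eqref{eq:alternating copies, decomp2}, including the treatment of the coincidence event $\{\tau^{\mp}_{\mp}=\zeta^{\mp}\}$. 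The only cosmetic difference is how parity-constancy is obtained: you argue it directly from the period-two shift homogeneity of the construction \eqref{eq:pasting processes}, while the paper makes the same fact rigorous by an induction over two-revival cycles, writing $U_\alpha(f\circ\pi)(n,x)$ as the sum $\sum_m g^{(-1)^n}_m(x)$ and showing each cycle term is independent of $n$ within a parity class.
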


The reader may observe that the second condition of the above theorem is not present in~\cite{Nagasawa76},
as Nagasawa only considers continuous processes with instant revivals at the exit points 
of the subprocesses.

If we only consider one process $X^0$ on $E$ and one transfer kernel $K^0$ from $X^0$ to~$(X^0, E)$,
and set $X^{-1} = X^{+1} = X^0$, $K^{-1} = K^{+1} = K^0$, no special conditions are required 
such that the pasted process $\pi(X)$ is a right process. 
We then obtain the following result for the instant revival process  (in the sense of \cite{IkedaNagasawaWatanabe66,Meyer75}),
constructed of copies of $X^0$ with the revival kernel $K^0$:

\begin{theorem} \label{theo:identical iterations}
 In the context of Theorem~\ref{theo:alternating iterations}, if $X^{-1} = X^{+1}$, $K^{-1} = K^{+1}$, then $\pi(X)$ is a right process on $E$.
\end{theorem}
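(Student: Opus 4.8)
The plan is to obtain the statement from Theorem~\ref{theo:concatenation countable} together with a translation symmetry of the construction, and \emph{not} by verifying the consistency conditions of Theorem~\ref{theo:alternating iterations}. Indeed, those conditions are not the right tool here: with $E^{-1}=E^{+1}=E$ the exclusive sets $E^{\pm 1}\setminus E^{\mp 1}$ are empty, so both entry times $\tau^{-1}_{-1}$ and $\tau^{+1}_{+1}$ equal $+\infty$; the left-hand sides of condition~(ii) then vanish while their right-hand sides are genuine revival terms such as $\EV^{0}_x\big(e^{-\alpha\zeta^{0}}\,K^{0}g;\,\zeta^{0}<\infty\big)$, which need not vanish. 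By the construction preceding Theorem~\ref{theo:alternating iterations} (an instance of Theorem~\ref{theo:concatenation countable}), $X$ is nonetheless a right process on $\tE=\bigcup_{n\in\N}\{n\}\times E$, with $E$ Radon as the state space of $X^0$, and it remains to show that $Y:=\pi(X)$, $Y_t:=\pi(X_t)$, is a right process on $E$.

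The crux is \emph{copy-independence}. Since $X^{-1}=X^{+1}=X^0$ and $K^{-1}=K^{+1}=K^0$, every subprocess $X^n=\{n\}\times X^0$ and every transfer kernel $K^n=\delta_{n+1}\otimes K^0$ is the same copy of the pair $(X^0,K^0)$; shifting every copy index up by one therefore carries $(X^m,K^m)_{m\ge n}$ onto $(X^m,K^m)_{m\ge n+1}$. Because the concatenation started in copy $n$ only ever visits copies $\ge n$, because the initial measure $\PV_{(n,x)}$ has the same structure for every $n$ up to this shift, and because $\pi$ erases the copy index, I would conclude that $\PV_{(n,x)}\circ Y^{-1}$ is independent of $n$; denote it by $\PV^E_x$. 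Setting $T_t f(x):=\EV_{(n,x)}\big(f(\pi(X_t))\big)$ for any $n$ then gives a well-defined semigroup, and the Markov property of $Y$ follows from that of $X$: conditioning on $\sG_s$ restarts $X$ from $X_s=(m,Y_s)$, and copy-independence replaces $\EV_{X_s}\big(f(\pi(X_t))\big)$ by $T_t f(Y_s)$, which depends on $X_s$ only through $Y_s$; the Chapman--Kolmogorov identity for $(T_t)$ emerges from the same computation. Normality, the measurability of $x\mapsto T_t\1_B(x)$, right continuity of the paths of $Y$, and the inherited filtration $(\sG_t)$ and shift operators transfer routinely, $\pi$ being continuous on each clopen copy and mapping the cemetery of $\tE$ to that of $E$.

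For property~\eqref{eq:HD2} I would pull excessive functions back through $\pi$. If $h$ is $\alpha$-excessive for $(T_t)$, then $e^{-\alpha t}\,T^{X}_t(h\circ\pi)(n,x)=e^{-\alpha t}\,T_t h(x)\uparrow h(x)$ as $t\downarrow 0$, so $h\circ\pi\in\sS_\alpha$ for $X$. Applying \eqref{eq:HD2} for the right process $X$ shows that $t\mapsto (h\circ\pi)(X_t)=h(Y_t)$ is a.s.\ right continuous under each $\PV_{(n,x)}$, hence under $\PV^E_x$; this is exactly \eqref{eq:HD2} for $Y$.

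The main obstacle is to make copy-independence rigorous as an identity of measures on path space, not merely a pathwise statement: one must check that the revival mechanism is equivariant under the index shift, i.e.\ that $K^n$ acts through $K^0$ on the germ field $\sF^n_{[\zeta^n-]}$ in a manner independent of $n$, so that the two shifted constructions induce the same law for the projected path. Once this is secured, the identical-process hypothesis is seen to carry the entire argument, which is precisely the reason no analogue of the consistency conditions of Theorem~\ref{theo:alternating iterations} is needed.
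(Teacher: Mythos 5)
Your proposal is correct, and it takes a genuinely different route from the paper's. Your opening observation is sharp and accurate: with $E^{-1}=E^{+1}$ both exclusive entry times $\tau^{-1}_{-1}$, $\tau^{+1}_{+1}$ are identically $+\infty$, the left-hand sides of condition (ii) of Theorem~\ref{theo:alternating iterations} vanish while the right-hand sides are genuine revival terms, so Theorem~\ref{theo:identical iterations} is not an instance of the \emph{statement} of Theorem~\ref{theo:alternating iterations}. The paper handles this by reusing that theorem's \emph{proof}: the decompositions \eqref{eq:alternating copies, decomp1} and \eqref{eq:alternating copies, decomp2} are derived there before hypotheses (i)--(ii) enter, and when $X^{-1}=X^{+1}$, $K^{-1}=K^{+1}$ the corresponding summands coincide (the functions $g^{\pm 1}_m$ are equal by construction), giving $n$-independence of $U_\alpha(f\circ\pi)(n,x)$ and the conclusion via the resolvent criterion of Theorem~\ref{theo:mapping, resolvent condition}. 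You instead obtain $n$-independence of $\EV_{(n,x)}\big(f(\pi(X_t))\big)$ --- indeed of the full law of $\pi(X)$ --- from shift-equivariance of the projective-limit construction under identical copies; this is exactly condition (iii) of Theorem~\ref{theo:mapping, standard} at the semigroup level, with conditions (i) and (ii) immediate, so you could simply cite that theorem rather than re-derive the Markov property and \eqref{eq:HD2} by hand (your pullback $h \mapsto h \circ \pi$ of excessive functions is in substance the proof of Theorem~\ref{theo:mapping, standard}). Your route is more elementary, bypassing both the inductive revival-cycle resolvent computation and the Laplace-inversion machinery of Theorem~\ref{theo:mapping, resolvent condition}, and it makes transparent \emph{why} no consistency conditions are needed; the paper's route buys brevity, being a two-line corollary of work already done for the genuinely two-process case. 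The copy-independence you flag as the remaining obstacle is indeed the one step to write out, but it is routine: under $\PV_{(n,x)}$ the first $n-1$ coordinates are dead paths of zero lifetime, and since all pairs $(\PV^m, K^m)$ coincide up to the counting index, the index shift carries the defining prescription of $\PV_{(n,x)}$ on $(\omega^n, \omega^{n+1}, \ldots)$ onto that of $\PV_{(1,x)}$ on $(\omega^1, \omega^2, \ldots)$, and $\pi$ erases the only remaining dependence on $n$, so the finite-dimensional distributions of $\pi(X)$ agree.
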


\section{Concatenation of Right Processes}

 In this section, let $(X^n, n \in \N)$ be a sequence of right processes 
 \begin{align*}
  X^n = \big( \Omega^n, \sF^n, (\sF^n_t)_{t \geq 0}, (X^n_t)_{t \geq 0}, (\Theta^n_t)_{t \geq 0}, (\PV^n_x)_{x \in E^n} \big)
 \end{align*}
 on disjoint state spaces $(E^n, n \in \N)$,
 and for each $n \in \N$, let a transfer kernel $K^n$ from $X^n$ to $(X^{n+1}, E^{n+1})$ be given.
 The objective is to give a rigorous construction of the concatenation and to prove Theorem~\ref{theo:concatenation countable},
 which will be done incrementally by lifting the concatenation of finitely many processes to the countable case.  

\subsection{Concatenation of Two Processes} \label{sec:concatenation two}

Carrying out the specification given in section~\ref{sec:intro concatenation} for the case of two processes,
we set the concatenated process $X$ of $X^1$ and $X^2$ via the transfer kernel $K := K^1$ 
on the sample space 
 $\Omega := \Omega^1 \times \Omega^2$ with $\sigma$-algebra
 $\sF := \sF^1 \otimes \sF^2$
to be $X_t \colon \Omega \rightarrow E$, defined for each $t \geq 0$, $\omega = (\omega^1, \omega^2) \in \Omega$ by
 \begin{align*}
  X_t \big( (\omega^1, \omega^2) \big) :=
    \begin{cases}
      X^1_t (\omega^1),			 & t < \zeta^1(\omega^1), \\
      X^2_{t - \zeta^1(\omega^1)}(\omega^2),	 & t \geq \zeta^1(\omega^1),
    \end{cases}
 \end{align*}
 as well as introduce a family of operators $(\Theta_t, t \geq 0)$ on $\Omega$, defined by
  \begin{align*}
   \Theta_t \big( (\omega^1, \omega^2) \big) := 
     \begin{cases}
	\big( \Theta^1_t(\omega^1), \omega^2 \big),			   & t < \zeta^1(\omega^1), \\
	\big( [\Delta^1], \Theta^2_{t - \zeta^1(\omega^1)} (\omega^2) \big),  & t \geq \zeta^1(\omega^1).
     \end{cases} 
  \end{align*}
We use the transfer kernel $K$ to concatenate the processes $X^1$ and $X^2$ probabilistically by giving a transition between the
distributions $(\PV^1_x, x \in E^1)$ and $(\PV^2_x, x \in E^2)$. To this end, we define measures $(\PV_x, x \in E)$ on $\sF$ by
setting for $x \in E^1$, $H \in b(\sF^1 \otimes \sF^2)$:
 \begin{align*}
   \EV_x(H)
   & =
    \begin{cases}
     \int H(\omega^1, \omega^2) \, \PV^2_y(d\omega^2) \, K(\omega^1, dy) \, \PV^1_x(d\omega^1),   & x \in E^1, \\
     \int H(\omega^1, \omega^2) \, \PV^2_x(d\omega^2) \, \delta_{[\Delta^1]}(\omega^1),            & x \in E^2.
    \end{cases}   
 \end{align*}

The main result for the concatenation $X$ of two processes $X_1$ and $X_2$ via the transfer kernel $K$ is as follows: 
\begin{theorem} \label{theo:concatenation two}
 $X$ is a right process. For all $x \in E^1$, $f \in b\sE^2$, with the revival time $R := \inf \{ t \geq 0: X_t \in E^2 \}$,
  \begin{align*}
   \EV_x \big( f(X_{R}) \, \1_{\{R < \infty \}} \, \big| \, \sF_{R-} \big) = K f \circ \pi^1 \, \1_{\{R < \infty \}}.
  \end{align*}
\end{theorem}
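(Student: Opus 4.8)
The plan is to verify the two defining features of a right process for $X$ in turn: the Markov property relative to a suitable augmented, right continuous filtration, and the right continuity condition~\eqref{eq:HD2}. Paths of $X$ are right continuous by construction, since both subprocesses are and the revived copy of $X^2$ is launched precisely at $X_{\zeta^1} = X_R$; its lifetime is $\zeta(\omega^1,\omega^2) = \zeta^1(\omega^1) + \zeta^2(\omega^2)$ and its semigroup is $T_t f(x) = \EV_x(f(X_t))$. Writing $f^i$ for the restriction of $f \in b\sE$ to $E^i$ (extended by $0$), I would work with the natural filtration of $X$, augmented and made right continuous in the usual way, and reduce~\eqref{eq:HD2} to the right continuity of $t \mapsto U_\alpha g(X_t)$ for $g \in b\cC_d(E)$ via \cite[Theorem 7.4]{Sharpe88}.

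First I would establish $\EV_x(f(X_{s+t}) \mid \sF_s) = T_t f(X_s)$. On $\{s \geq \zeta^1\}$ the process already coincides with a time shift of $X^2$, so the identity is inherited from the Markov property of $X^2$, and on $\{s+t < \zeta^1\}$ it is inherited from that of $X^1$. The crux, and the step I expect to be the main obstacle, is the straddling event $\{s < \zeta^1 \leq s+t\}$, where one must insert the revival at $\zeta^1$ and carry the transfer data past the time shift. This is exactly where the germ field measurability of $K$ enters: because $K(\,\cdot\,, A) \in \sF^1_{[\zeta^1-]}$, one has $K(\,\cdot\,,A) \circ \Theta^1_u = K(\,\cdot\,,A)$ on $\{u < \zeta^1\}$, which together with the lifetime shift $\zeta^1 \circ \Theta^1_u = \zeta^1 - u$ from~\eqref{eq: lifetime shift} lets the revival law survive the shift and recombine into $T_t f(X_s)$.

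The same germ field computation delivers the resolvent in closed form, which I would then use for~\eqref{eq:HD2}. For $x \in E^2$ the process is simply $X^2$, so $U_\alpha f(x) = U^2_\alpha f^2(x)$. For $x \in E^1$, splitting the time integral at $\zeta^1 = R$, substituting $u = t - \zeta^1$, and integrating out $\omega^2$ against $\PV^2_y$ with $y \sim K(\omega^1,\cdot)$ gives
\begin{align*}
 U_\alpha f(x) = U^1_\alpha f^1(x) + \EV^1_x\big( e^{-\alpha \zeta^1} \, K U^2_\alpha f^2 \big).
\end{align*}
Setting $h(x) := \EV^1_x(e^{-\alpha \zeta^1}\, K U^2_\alpha f^2)$, a short calculation using $h(\Delta)=0$, the Markov property of $X^1$, the germ field invariance of $K U^2_\alpha f^2$, and $\zeta^1 \circ \Theta^1_u = \zeta^1 - u$ shows $e^{-\alpha u} T^1_u h \uparrow h$ as $u \downarrow 0$, so $h \in \sS^1_\alpha$ is $\alpha$-excessive for $X^1$.

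With this in hand, right continuity of $t \mapsto U_\alpha g(X_t)$ follows regime by regime: for $t < \zeta^1$ the value equals $U^1_\alpha g^1(X^1_t) + h(X^1_t)$, right continuous because $X^1$ is a right process and both summands are $\alpha$-excessive for $X^1$; for $t \geq \zeta^1$ it equals $U^2_\alpha g^2(X^2_{t-\zeta^1})$, right continuous along $X^2$; and right continuity at $t = \zeta^1$ follows from the right continuity of $X^2$ at time $0$, matching $U_\alpha g(X_{\zeta^1}) = U^2_\alpha g^2(X^2_0)$. By \cite[Theorem 7.4]{Sharpe88} this yields~\eqref{eq:HD2}, so $X$ is a right process and the strong Markov property follows [loc.\ cit.]. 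Finally, for the revival identity I would read off from the definition of $\PV_x$ that, conditionally on $\sF_{R-} = \sF_{\zeta^1-}$, the revived point $X_R = X^2_0$ has law $K(\omega^1,\cdot)$; since $Kf$ is $\sF^1_{[\zeta^1-]}$-measurable, this disintegration gives $\EV_x(f(X_R)\,\1_{\{R<\infty\}} \mid \sF_{R-}) = Kf \circ \pi^1 \, \1_{\{R<\infty\}}$.
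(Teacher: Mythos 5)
Your proposal is correct and takes essentially the same route as the paper, which itself only sketches the argument and defers the details to \cite[Theorem (14.8)]{Sharpe88}: the same decomposition $U_\alpha f = \1_{E^1}\big(U^1_\alpha f^1 + \EV^1_{\,\cdot\,}(e^{-\alpha \zeta^1}\, K U^2_\alpha f^2)\big) + \1_{E^2}\, U^2_\alpha f^2$, the same observation that the middle term is $\alpha$-excessive for $X^1$ via the germ-field invariance of $K$ and the lifetime shift \eqref{eq: lifetime shift}, and the same regime-by-regime right continuity yielding \eqref{eq:HD2}. Your direct treatment of the straddling event $\{s < \zeta^1 \leq s+t\}$ and the disintegration argument for the revival formula are precisely the ``extensive analysis'' the paper attributes to Sharpe, so nothing in your plan diverges from the paper's proof in substance.
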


This theorem is proved in detail in \cite[Theorem (14.8)]{Sharpe88} by an examination of the resolvent and of the excessive functions of 
the resulting concatenated process~$X$. We give a short sketch:

Using Dynkin's formula \eqref{eq:Dynkins formula (resolvent)} for decomposing the resolvent $(U_\alpha, \alpha > 0)$ of $X$ at the revival time $R$
(which a.s.\ coincides with the terminal time $\zeta^1$ of $X^1$), one obtains for $\alpha > 0$, $f \in b\cC(E)$, $x \in E = E^1 \cup E^2$,
 \begin{align*}
  U_\alpha f(x) 
  & = \1_{E^1}(x) \big( U^1_\alpha f^1(x) + \EV^1_x( e^{-\alpha \zeta^1} \, K U^2_\alpha f^2) \big) + \1_{E^2}(x) \, U^2_\alpha f^2(x),
 \end{align*}
with $f^j := \restr{f}{E^j}$, and $U^j$ being the resolvent of $X^j$, $j \in \{1,2\}$. 
An extensive analysis of the above components under the utilization of the strong Markov property of $X^1$ and $X^2$ as well as the properties 
of the transfer kernel $K$ then shows the Laplace-transformed equivalent of the Markov property for $X$. 
But $U^2_\alpha f^2$ is $\alpha$-excessive for $X^2$, 
and both $U^1_\alpha f^1$ and, by the shift properties of the transfer kernel $K$, the function $x \mapsto \EV^1_x( e^{-\alpha \zeta^1} \, K U^2_\alpha f^2)$
are $\alpha$-excessive for $X^1$. 
As both $X^1$ and $X^2$ satisfy~\eqref{eq:HD2}, it is immediate from the above decomposition that $t \mapsto U_\alpha f(X_t)$ is a.s.\ right continuous, 
which 
yields \eqref{eq:HD2} for $X$.
 
\subsection{Concatenation of Finitely Many Processes} \label{sec:concatenation finite}

Next, we consider for fixed $m \in \N$ the concatenation of the right processes $X^1, \ldots, X^m$ via the transfer kernels $K^1, \ldots, K^{m-1}$:
For every $n \in \{1, \ldots, m\}$
set $E^{(n)} := \bigcup_{j = 1}^n E^j$ as topological union of the spaces $(E^j, j \in \{1, \ldots, n\})$,
as well as $E := E^{(m)}$.
Directly extending the construction of section \ref{sec:concatenation two}, we define
the concatenated process $X$ on the sample space 
 $\Omega := \Omega^1 \times \cdots \times \Omega^m$ with $\sigma$-algebra
 $\sF := \sF^1 \otimes \cdots \otimes \sF^m$
to be $X_t \colon \Omega \rightarrow E$, defined for each $t \geq 0$, $\omega = (\omega^1, \ldots, \omega^m) \in \Omega$,
with
\begin{align}\label{eq:zeta_n nth killing time}
 \zeta^{(n)}(\omega) := \zeta^{(n)}(\omega^1, \ldots, \omega^{n}) := \zeta^1(\omega^1) + \cdots + \zeta^{n}(\omega^{n})
\end{align}
for $n \in \{1, \ldots, m-1\}$,
by
 \begin{align*}
  X_t(\omega)
 := \begin{cases}
      X^1_t(\omega^1),				 & t < \zeta^{(1)}(\omega), \\
      X^2_{t - \zeta^{(1)}(\omega)}(\omega^2),		 & \zeta^{(1)}(\omega) \leq t < \zeta^{(2)}(\omega), \\
      X^3_{t - \zeta^{(2)}(\omega)}(\omega^3),		 & \zeta^{(2)}(\omega) \leq t < \zeta^{(3)}(\omega), \\
      ~ \vdots					 & ~ \vdots \\
      X^m_{t - \zeta^{(m-1)}(\omega)}(\omega^m),  & t \geq \zeta^{(m-1)}(\omega),
    \end{cases}
 \end{align*}
Furthermore, we introduce a family of operators $(\Theta_t, t \geq 0)$ on $\Omega$ by setting for each $t \geq 0$, $\omega = (\omega^1, \ldots, \omega^m) \in \Omega$:
 \begin{align*}
    \Theta_t(\omega) :=  
     \begin{cases}
	\big( \Theta^1_t(\omega^1), \omega^2, \omega^3, \omega^4, \ldots, \omega^m \big),				 & t < \zeta^{(1)}(\omega), \\
	\big( [\Delta^1], \Theta^2_{t - \zeta^{(1)}(\omega)} (\omega^2), \omega^3, \omega^4, \ldots, \omega^m \big), 	 & \zeta^{(1)}(\omega) \leq t < \zeta^{(2)}(\omega), \\
	\big( [\Delta^1], [\Delta^2], \Theta^3_{t - \zeta^{(2)}(\omega)} (\omega^3), \omega^4, \ldots, \omega^m \big),		 & \zeta^{(2)}(\omega) \leq t < \zeta^{(3)}(\omega), \\
	~ \vdots & ~ \vdots \\
	\big( [\Delta^1], \ldots, [\Delta^{m-1}], \Theta^n_{t - \zeta^{(m-1)}(\omega)}(\omega^m) \big),  & t \geq \zeta^{(m-1)}(\omega),
     \end{cases}
 \end{align*}
The formal proof that $(\Theta_t, t \geq 0)$ is indeed a family of shift operators for $(X_t, t \geq 0)$ is a straight-forward computation with the help 
of the shift property \eqref{eq: lifetime shift} of the lifetime.
 
Like in the construction for two processes in above section \ref{sec:concatenation two}, we use the transfer kernels $(K^n, n \in \{1, \ldots, m-1\})$ 
to concatenate
the separate measures $(\PV^n_x, x \in E^n)$, $n \in \{1, \ldots, m\}$, of the partial processes $(X^n, n \in \{1, \ldots, m\})$.
For every $x \in E$, we define the measure $\PV_x$ on $\sF$ by setting for $x \in E^n$, $H \in b\sF$:
 \begin{align*} 
    \EV_x(H)   := \int H(\omega^1, \ldots, \omega^n) \, & \PV^n_{x^m}(d\omega^m) \, K^{m-1}(\omega^{m-1},dx^m) \, \PV^{m-1}_{x^{m-1}}(d\omega^{m-1}) \\
                                                &  \cdots ~ \PV^{n+1}_{x^{n+1}}(d\omega^{m+1}) \, K^{n}(\omega^{n},dx^{n+1}) \, \PV^{n}_{x}(d\omega^{n}) \\
                                                &  \delta_{[\Delta^{n-1}]}(d\omega^{n-1}) \cdots \delta_{[\Delta^{1}]}(d\omega^{1}).
 \end{align*}

Furthermore, we consider the $n$-th revival time
 \begin{align*}
  R^n := \inf \{ t \geq 0: X_t \in E^{n+1} \}, \quad n \in \{1, \ldots, m-1\},
 \end{align*}
which is terminal time, as $X$ is right continuous by construction, and every subspace $E^{n+1}$ is isolated in $E$.
 
The extension of Theorem~\ref{theo:concatenation two} to the finite concatenation $X$ of $X^1, \ldots, X^m$ via the transfer kernels $K^1, \ldots, K^{m-1}$ then reads as follows: 

\begin{theorem} \label{theo:concatenation finite}
 $X$ is a right process. For $n \in \{1, \ldots, m-1\}$, $x \in E^{(n)}$, $f \in b\sE^{n+1}$,
  \begin{align*}
   \EV_x \big( f(X_{R^n}) \, \1_{\{R^n < \infty \}} \, \big| \, \sF_{R^n-} \big) = K^n f \circ \pi^n \, \1_{\{R^n < \infty \}}.
  \end{align*}
\end{theorem}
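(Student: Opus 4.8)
The plan is to prove Theorem~\ref{theo:concatenation finite} by induction on $m$, using the two-process concatenation of Theorem~\ref{theo:concatenation two} both as the base case $m=2$ and as the engine of the inductive step. The key idea is to \emph{peel off the last process}: writing $Y$ for the concatenation of $X^1, \ldots, X^{m-1}$ via $K^1, \ldots, K^{m-2}$, which by the induction hypothesis is a right process on $E^{(m-1)}$ satisfying the asserted revival formulas at its own revival times, I would realize $X$ as the concatenation of the two right processes $Y$ and $X^m$ via $K^{m-1}$ and then invoke Theorem~\ref{theo:concatenation two}.

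The decisive point is that $K^{m-1}$, which a priori is only a transfer kernel from the genuine subprocess $X^{m-1}$, is also a transfer kernel from $Y$ to $(X^m, E^m)$ in the sense of Definition~\ref{def:transfer kernel}, i.e.\ a probability kernel from $(\Omega^Y, \sF^Y_{[\zeta^Y-]})$ to $(E^m, \sE^m)$ with $\Omega^Y = \Omega^1 \times \cdots \times \Omega^{m-1}$ and $\zeta^Y = \zeta^{(m-1)}$. Lifting $K^{m-1}$ to $\Omega^Y$ through its dependence on $\omega^{m-1}$ only, I would check membership in the left germ field of $Y$ in two steps. For the shift invariance I distinguish the cases $t < \zeta^{(m-2)}$, where $\Theta^Y_t$ leaves the coordinate $\omega^{m-1}$ untouched, and $\zeta^{(m-2)} \leq t < \zeta^Y$, where $\Theta^Y_t$ acts on $\omega^{m-1}$ by $\Theta^{m-1}_{t-\zeta^{(m-2)}}$ with $t - \zeta^{(m-2)} < \zeta^{m-1}$; in both cases the $\sF^{m-1}_{[\zeta^{m-1}-]}$-invariance of $K^{m-1}$ gives $K^{m-1} \circ \Theta^Y_t = K^{m-1}$ on $\{t < \zeta^Y\}$. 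For the measurability I would use that, as $t \uparrow \zeta^Y$, the process $Y$ traverses exactly the trajectory of $X^{m-1}$ on $[0, \zeta^{m-1})$, so that $\sF^{m-1}_{\zeta^{m-1}-}$ embeds into $\sF^Y_{\zeta^Y-}$ and hence $K^{m-1}$ is $\sF^Y_{\zeta^Y-}$-measurable. I expect this germ-field identification---in particular the measurability step, which requires reconstructing $X^{m-1}$ from the pre-death trajectory of $Y$---to be the main technical obstacle.

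Granting this, I would verify directly from the definitions that the sextuple $X$ constructed in this section coincides with the two-process concatenation of $Y$ and $X^m$ via $K^{m-1}$: the sample spaces agree by associativity of the product, the path and shift formulas match segment by segment using the additivity $\zeta^{(m-1)} = \zeta^{(m-2)} + \zeta^{m-1}$ and the shift identity~\eqref{eq: lifetime shift}, and the initial measure $\PV_x$ factors, for $x \in E^{(m-1)}$, as $\PV^Y_x(d\omega^Y) \, K^{m-1}(\omega^{m-1}, dx^m) \, \PV^m_{x^m}(d\omega^m)$, which is precisely the two-process revival measure. Theorem~\ref{theo:concatenation two} then shows that $X$ is a right process and yields the asserted formula at the last revival time $R^{m-1}$, once one notes that $R^{m-1}$ is the first entry of $X$ into $E^m$ and that $K^{m-1} f \circ \pi^Y = K^{m-1} f \circ \pi^{m-1}$.

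It remains to establish the revival formula at $R^n$ for $n \leq m-2$. Here I would exploit that on $\{R^n < \infty\}$ the entry point $X_{R^n}$ lies in $E^{n+1} \subseteq E^{(m-1)}$, so that $R^n < \zeta^Y$ and the path of $X$ up to $R^n$ coincides with that of $Y$; consequently $f(X_{R^n})$, the event $\{R^n < \infty\}$ and the germ field $\sF_{R^n-}$ all depend only on the coordinate $\omega^Y$ and agree with the corresponding objects for $Y$. Since under $\PV_x$ the factor governing $\omega^m$ is integrated out without affecting these quantities, the conditional expectation reduces to the one computed under $\PV^Y_x$, which equals $K^n f \circ \pi^n$ by the induction hypothesis. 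This closes the induction and proves the theorem.
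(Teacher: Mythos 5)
Your proposal is correct and follows essentially the same route as the paper: an induction that peels off the last process, lifts $K^{m-1}$ to a transfer kernel from the already-built concatenation $Y$ (your germ-field identification is precisely the paper's Lemmas~\ref{lem:random shift is measurable II} and~\ref{lem:lift of transfer kernel, finite}, where the measurability step you flag as the main obstacle is settled by writing the generators $f(X^{m-1}_t)\,\1_{\{t<\zeta^{m-1}\}}$ of $\sF^{m-1}_{\zeta^{m-1}-}$, pulled back by $\pi^{m-1}$, as shifts by $\Theta_{\zeta^{(m-2)}}$ of generators of $\sF^Y_{\zeta^Y-}$ and invoking the shift-measurability lemma, which uses the strong Markov property of $Y$ supplied by the induction hypothesis), then invokes Theorem~\ref{theo:concatenation two} for the right-process property and the formula at $R^{m-1}$, and transfers the formulas at $R^n$, $n \leq m-2$, through the projection exactly as the paper does with its multiplicative generating classes $J$, $J^{(m)}$. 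The only difference is cosmetic indexing (you step from $m-1$ to $m$ where the paper steps from $m$ to $m+1$).
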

 
We will prove this theorem iteratively,
that is, by assuming that 
 the concatenation $X^{(n)}$ of the processes $X^1, \ldots, X^n$ via the transfer kernels $K^1, \ldots, K^{n-1}$ is already a right process for any fixed $n \in \{1, \ldots, m-1\}$,
and then applying Sharpe's result (Theorem~\ref{theo:concatenation two}) in order to concatenate $X^{(n)}$ with $X^{n+1}$ via the transfer kernel $K^n$.
Before doing this, we need to lift the transfer kernels~$K^n$ from~$X^n$ (to~$(X^{n+1}, E^{n+1})$) to transfer kernels from~$X^{(n)}$ (to~$(X^{n+1}, E^{n+1})$).
We begin with a general result on stopping times:
\begin{lemma}\label{lem:random shift is measurable II} 
  Let $X$ be a right continuous strong Markov process, and $S$, $T$ be stopping times over the natural filtration $(\sF_t, t \geq 0)$,
  such that $S + T \circ \Theta_S = T$. Then $\Theta_S$ is $\sF_{T-}/\sF_{T-}$-measurable.
\end{lemma}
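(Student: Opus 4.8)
The plan is to identify $\sF_{T-}$ with the $\sigma$-algebra on which $\Theta_S$ is well-behaved. Set $\mathcal G := \{B \in \sF_{T-} : \Theta_S^{-1}(B) \in \sF_{T-}\}$. Since preimages commute with countable unions and complements, $\mathcal G$ is automatically a $\sigma$-algebra, so it suffices to check that $\mathcal G$ contains a generating family of $\sF_{T-}$. Recall that $\sF_{T-}$ is generated by $\sF_0 = \sigma(X_0)$ together with the sets $A \cap \{u < T\}$ for $u \geq 0$ and $A \in \sF_u = \sigma(X_s : s \leq u)$. The hypothesis $S + T \circ \Theta_S = T$ first forces $S \leq T$ (because $T \circ \Theta_S \geq 0$) and yields the transport identity $T \circ \Theta_S = T - S$; combined with the flow property $X_s \circ \Theta_S = X_{S+s}$ and $\{u < T\} \circ \Theta_S = \{u < T \circ \Theta_S\}$, this gives, for $A = \{(X_s)_{s \leq u} \in \Gamma\}$,
\[ \Theta_S^{-1}\big(A \cap \{u < T\}\big) = \{(X_{S+s})_{s \leq u} \in \Gamma\} \cap \{S + u < T\}. \]

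The core step is to place this last set in $\sF_{T-}$. I would first prove the functional version: for bounded continuous $g$ and $0 \leq s_1 < \cdots < s_j = u$, the variable $W := g(X_{S+s_1}, \ldots, X_{S+s_j})\, \1_{\{S+u<T\}}$ lies in $b\sF_{T-}$, and then upgrade to arbitrary $\Gamma$ by a functional monotone class argument. To this end I approximate $S$ from above by the dyadic stopping times $S_m := \lceil 2^m S \rceil\, 2^{-m}$ (with $S_m = \infty$ on $\{S = \infty\}$), so that $S_m \downarrow S$ and $S_m \geq S$. On each atom $\{S_m = k 2^{-m}\} \in \sF_{k2^{-m}}$ the approximant $W_m := g(X_{S_m+s_1}, \ldots)\,\1_{\{S_m+u<T\}}$ equals $Z_{m,k}\,\1_{\{(k2^{-m}+u)<T\}}$ with $Z_{m,k} \in b\sF_{k2^{-m}+u}$; since $V\,\1_{\{v<T\}} \in b\sF_{T-}$ whenever $V \in b\sF_v$, directly by the definition of $\sF_{T-}$, and the atoms are disjoint, summing over them gives $W_m \in b\sF_{T-}$. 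Passing to the limit, right continuity of $X$ gives $X_{S_m+s_i} \to X_{S+s_i}$, while $S_m \downarrow S$ gives $\{S_m+u<T\} \uparrow \{S+u<T\}$; hence $W_m \to W$ pointwise (both factors converge on $\{S+u<T\}$, and both vanish on its complement, where $S_m+u \geq S+u \geq T$), so $W \in b\sF_{T-}$.

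It remains to treat the generator $\sF_0$, that is, to show $\{X_S \in C\} = \Theta_S^{-1}\{X_0 \in C\} \in \sF_{T-}$. On $\{S<T\}$ this is precisely the $u \downarrow 0$ instance of the core step, giving $g(X_S)\,\1_{\{S<T\}} \in b\sF_{T-}$. The step I expect to be the main obstacle is the boundary $\{S = T\}$: there $T \circ \Theta_S = 0$, so the variables $X_{S+s}$ for $s > 0$ already lie beyond $T$ and carry no pre-$T$ information, and the approximation from the right breaks down, leaving $X_S = X_T$ an a priori $\sF_T$- but not obviously $\sF_{T-}$-measurable quantity. I would attack this with the strong Markov property at $S$: conditioning shows $\{S=T\} = \{X_S \in B\}$ almost surely, where $B = \{y : \PV_y(T=0)=1\}$ is the set of points regular for $\{T=0\}$ (a Borel set by Blumenthal's zero-one law). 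This localizes $X_S$ on $\{S=T\}$, and in the situations where the lemma is applied $T$ is a terminal time realised only at an isolated transfer set or at the cemetery, so that $X_S = X_T = \Delta$ is constant on $\{S=T\}$ and the remaining piece $\{X_S \in C\} \cap \{S=T\}$ is trivially $\sF_{T-}$-measurable; together with the reduction above this exhausts the generators and proves $\mathcal G = \sF_{T-}$.
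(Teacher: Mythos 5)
Your core step is sound, and it is in essence the paper's own argument carried out by hand. The paper quotes the standard fact that $\Theta_S$ is $\sF_{t+S}/\sF_t$-measurable (\cite[Corollary I.8.5]{BlumenthalGetoor69}) and then decomposes
$\Theta_S^{-1}\big(A \cap \{t<T\}\big) = \Theta_S^{-1}(A) \cap \{t+S<T\} = \bigcup_{q \in \Q_+} \big( \Theta_S^{-1}(A) \cap \{t+S<q\} \big) \cap \{q<T\}$,
each term lying in $\sF_{T-}$ by the definition of $\sF_{t+S}$. Your dyadic discretization $S_m \downarrow S$ re-proves precisely that cited measurability (it is how Blumenthal--Getoor prove it) and effects the same reduction of the random time $S+t$ to deterministic times via the atoms $\{S_m = k2^{-m}\}$, at the cost of invoking right continuity explicitly, which the hypotheses provide. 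Up to and including the case $g(X_S)\,\1_{\{S<T\}} \in b\sF_{T-}$, everything checks out.

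The genuine defect is your last paragraph, and it stems from your choice of convention for $\sF_{T-}$. The paper defines $\sF_{T-}$ \emph{without} adjoining $\sF_0$: the proof of Theorem~\ref{theo:concatenation countable} states explicitly that, by definition, $\sF_{R^n-} = \sigma\big(\{A \cap \{t<R^n\}: t \geq 0, A \in \sF_t\}\big)$, and the proofs of Theorem~\ref{theo:concatenation finite} and Lemma~\ref{lem:lift of transfer kernel, finite} use the matching generators $f(X_t)\,\1_{\{t<T\}}$. Under that convention your core step already exhausts the generators and the proof is complete; there is no separate $\sF_0$ generator and no boundary case. Under your $\sF_0$-inclusive convention, by contrast, the lemma is \emph{false}, so no repair of your boundary analysis can exist: take the chain on $\{a,b,c\}$ with $b,c$ absorbing, jumping from $a$ at rate one to $b$ or $c$ with probability $1/2$ each, and let $S = T$ be the entry time of $\{b,c\}$. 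Then $\{T \leq t\} = \{X_t \neq a\} \in \sF_t$, and $T \circ \Theta_T = 0$ on $\{T < \infty\}$ since $X_T \in \{b,c\}$ is absorbing, so $S + T \circ \Theta_S = T$ holds; yet $\Theta_T^{-1}\{X_0 = b\} = \{X_T = b\}$ is not in $\sigma\big(\sF_0 \cup \{A \cap \{t<T\}: A \in \sF_t\}\big)$, because two paths from $a$ with the same jump time but different destinations lie in exactly the same generating sets (and even up to $\PV_a$-null sets the destination is independent of the jump time). Your proposed fix also fails on its own terms: an almost sure identity $\{S=T\} = \{X_S \in B\}$ cannot deliver measurability with respect to the uncompleted natural filtration, and the appeal to ``the situations where the lemma is applied'' proves at best the applications, not the stated lemma. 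Deleting the final paragraph and recording the paper's definition of $\sF_{T-}$ turns your proposal into a correct, self-contained (if more laborious than necessary) proof.
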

\begin{proof}
 It is well-known that $\Theta_S$ is $\sF_{t+S}/\sF_t$-measurable, see \cite[Corollary I.8.5]{BlumenthalGetoor69}.
 Consider the shift on a generating element of $\sF_{T-}$, that is for $t \geq 0$, $A \in \sF_t$,
  \begin{align*}
   \Theta_S^{-1} \big( A \cap \{ t < T \} \big)
   & = \Theta_S^{-1}(A) \cap \{ t < T \circ \Theta_S \} \\
   & = \Theta_S^{-1}(A) \cap \{ t + S < T \} \\
   & = \bigcup_{q \in \Q_+} \Big( \big( \Theta_S^{-1}(A) \cap \{ S < q - t \} \big) \cap \{ q < T \} \Big).
  \end{align*}
 As $\Theta_S^{-1}(A) \in \sF_{t + S}$, we see that, by the definition of $\sF_{t + S}$, the inner term satisfies
  \begin{align*}
   \forall q \in \Q_+: \quad \Theta_S^{-1}(A) \cap \{ t + S < q \} \in \sF_q.
  \end{align*}
 So every set of the countable union above is an element of~$\sF_{T-}$.
\end{proof}

In particular, the random times $S := \zeta^{(n-1)}$, $T := \zeta^{(n)}$ satisfy the requirements of the above lemma for the process $X^{(n)}$, 
in case it is a strongly Markovian.

\begin{lemma} \label{lem:lift of transfer kernel, finite}
 Assume $X^{(n)}$ is a strong Markov process for some $n \in \{1, \ldots, m-1\}$. Then $K^n \circ \pi^n$ defined by
  \begin{align*}
     K^n \circ \pi^n \big( (\omega^1, \ldots, \omega^n), dy \big) := K^n ( \omega^n, dy )
  \end{align*}
 is a transfer kernel from $X^{(n)}$ to $(X^{n+1}, E^{n+1})$.
\end{lemma}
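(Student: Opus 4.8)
The plan is to verify directly the two defining properties of a transfer kernel from Definition~\ref{def:transfer kernel}: that $K^n \circ \pi^n(\omega, \cdot)$ is a probability measure on $(E^{n+1}, \sE^{n+1})$ for each $\omega$, and that $\omega \mapsto K^n\circ\pi^n(\omega, A)$ belongs to the left germ field $\sF^{(n)}_{[\zeta^{(n)}-]}$ for every $A \in \sE^{n+1}$. The first property is immediate, since $K^n\circ\pi^n((\omega^1,\ldots,\omega^n), \cdot) = K^n(\omega^n, \cdot)$ and $K^n$ is already a probability kernel. So I fix $A \in \sE^{n+1}$, abbreviate $H_A := K^n(\cdot, A)\circ\pi^n$ and $H^n_A := K^n(\cdot, A)$ (the latter being, by hypothesis on $K^n$, an element of $\sF^n_{[\zeta^n-]}$), and must show according to Definition~\ref{def:left germ field} that $H_A$ is $\sF^{(n)}_{\zeta^{(n)}-}$-measurable and satisfies $H_A\circ\Theta_t = H_A$ a.s.\ on $\{t < \zeta^{(n)}\}$ for every $t \ge 0$.

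For the measurability I would exploit the abbreviation $\Phi := \Theta_{\zeta^{(n-1)}}$, the shift of the concatenation by its $(n-1)$-st revival epoch. From the explicit form of the shift operators one reads off $\Phi(\omega) = ([\Delta^1], \ldots, [\Delta^{n-1}], \omega^n)$, so that $\pi^n\circ\Phi = \pi^n$, while the lifetime shift identity \eqref{eq: lifetime shift} gives $\zeta^{(n)}\circ\Phi = (\zeta^{(n)} - \zeta^{(n-1)})\vee 0 = \zeta^n\circ\pi^n$. Combining $X_u\circ\Phi = X_{\zeta^{(n-1)}+u}$ with the construction of $X^{(n)}$ yields the path identity $X^n_u\circ\pi^n\,\1_{\{u<\zeta^n\circ\pi^n\}} = (X^{(n)}_u\,\1_{\{u<\zeta^{(n)}\}})\circ\Phi$ for every $u \ge 0$. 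Since $X^{(n)}_u\,\1_{\{u<\zeta^{(n)}\}}$ is $\sF^{(n)}_{\zeta^{(n)}-}$-measurable, and since $\Phi$ is $\sF^{(n)}_{\zeta^{(n)}-}/\sF^{(n)}_{\zeta^{(n)}-}$-measurable by Lemma~\ref{lem:random shift is measurable II} (applicable because $X^{(n)}$ is assumed strongly Markovian, with $S=\zeta^{(n-1)}$, $T=\zeta^{(n)}$), this identity carries the generators of $\sF^n_{\zeta^n-}$ forward to $\sF^{(n)}_{\zeta^{(n)}-}$-measurable functions. A monotone-class argument then gives $\pi^{n,-1}(\sF^n_{\zeta^n-})\subseteq\sF^{(n)}_{\zeta^{(n)}-}$, whence $H_A = H^n_A\circ\pi^n$ is $\sF^{(n)}_{\zeta^{(n)}-}$-measurable.

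For the shift invariance I would split $\{t<\zeta^{(n)}\}$ along $\zeta^{(n-1)}$. On $\{t<\zeta^{(n-1)}\}$ the $n$-th coordinate is untouched by $\Theta_t$, so $\pi^n\circ\Theta_t = \pi^n$ and $H_A\circ\Theta_t = H_A$ holds identically. On $\{\zeta^{(n-1)}\le t<\zeta^{(n)}\}$ one has $\pi^n\circ\Theta_t = \Theta^n_{t-\zeta^{(n-1)}}\circ\pi^n$, so $H_A\circ\Theta_t = H^n_A\circ\Theta^n_{t-\zeta^{(n-1)}}\circ\pi^n$, and the germ-field invariance of $H^n_A$ for $X^n$ must be invoked with the \emph{random} shift amount $t-\zeta^{(n-1)}$. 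This is the main obstacle, since $H^n_A\circ\Theta^n_s = H^n_A$ is a priori available only for deterministic $s$. I would resolve it by disintegrating $\PV_x$ over the first $n-1$ coordinates: under this conditioning $\zeta^{(n-1)}$ freezes to a constant $c$ and $\omega^n$ is conditionally distributed as $\PV^n_y$ for the transfer point $y$, so the deterministic germ-field property applies with $s=t-c$; joint measurability of $(s,\omega^n)\mapsto\1\{H^n_A(\Theta^n_s\omega^n)\neq H^n_A(\omega^n),\,s<\zeta^n\}$ makes Fubini applicable, and the exceptional set is seen to have $\PV_x$-measure zero. Combining the two regimes gives $H_A\circ\Theta_t = H_A$ a.s.\ on $\{t<\zeta^{(n)}\}$, which together with the measurability established above shows $H_A\in\sF^{(n)}_{[\zeta^{(n)}-]}$ and completes the proof.
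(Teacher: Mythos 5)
Your proof is correct and follows the same overall route as the paper. The measurability half is essentially the paper's argument verbatim: the same pull-back identity
\begin{align*}
 \big( f(X^n_u) \, \1_{\{u < \zeta^n\}} \big) \circ \pi^n = \big( f(X^{(n)}_u) \, \1_{\{u < \zeta^{(n)}\}} \big) \circ \Theta_{\zeta^{(n-1)}},
\end{align*}
the same appeal to Lemma~\ref{lem:random shift is measurable II} with $S = \zeta^{(n-1)}$, $T = \zeta^{(n)}$ (which is precisely where the strong Markov hypothesis on $X^{(n)}$ enters), and a monotone class step to conclude $(\pi^n)^{-1}\big(\sF^n_{\zeta^n-}\big) \subseteq \sF^{(n)}_{\zeta^{(n)}-}$. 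Where you genuinely add something is in the shift-invariance half: the paper also splits $\{t < \zeta^{(n)}\}$ at $\zeta^{(n-1)}$, but on the regime $\zeta^{(n-1)} \leq t < \zeta^{(n)}$ it invokes the germ-field invariance of $K^n$ at the \emph{random} time $t - \zeta^{(n-1)}(\omega)$ while having fixed a null set $N^n$ only for the deterministic time $t$ --- a step that, read literally, does not go through. You correctly identify this as the delicate point and repair it by disintegrating $\PV_x$ over the first $n-1$ coordinates and the transfer point, under which $\zeta^{(n-1)}$ freezes to a constant, $\omega^n$ has conditional law $\PV^n_y$, and the deterministic invariance applies at $s = t - c$; Fubini, together with the joint measurability you flag, then assembles the global null set (note this also takes care of the fact that the marginal of $\omega^n$ under $\PV_x$ is a mixture of the $\PV^n_y$, so one needs the invariance null sets under all $\PV^n_y$, which is what ``a.s.''\ in Definition~\ref{def:left germ field} provides). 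This is exactly the rigorous reading of the paper's one-line invocation --- under the product-form construction of $\PV_x$ the shift amount is deterministic given the earlier coordinates --- so your version is, if anything, more careful than the printed proof. No gaps.
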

\begin{proof}
 Obviously, $K^n \circ \pi^n$ is a probability measure in the second argument, because $K^n$ is a Markov kernel.
 In order to show the $\sF^{(n)}_{[\zeta^{(n)}-]}$-measurability of $K^n \circ \pi^n (\,\cdot\,, dy)$, 
 we start by observing that
  \begin{align*}
   \big( \pi^n \big)^{-1} \big( \sF^{n}_{\zeta^n-} \big)
   = \Omega^1 \times \cdots \times \Omega^{n-1} \times \sF^{n}_{\zeta^n-}
   \subseteq \sF^{(n)}_{\zeta^{(n)}-}.
  \end{align*}
 This can be seen by the following argument: The $\sigma$-algebra $\sF^{n}_{\zeta^n-}$ is generated by 
  \begin{align*}
   f(X^n_t) \, \1_{\{t < \zeta^n\}}, \quad f \in b\sE^n,
  \end{align*}
 and these functions, extended to $\Omega^{(n)}$, fulfill 
  \begin{align*}
     \big( f(X^n_t) \, \1_{\{t < \zeta^n\}} \big) \circ \pi^n 
    & = f \big( X^{(n)}_{t + \zeta^{(n-1)}} \big) \, \1_{\{t + \zeta^{(n-1)} < \zeta^{(n)}\}} \\
    & = \big( f(X^{(n)}_t) \, \1_{\{ t < \zeta^{(n)}\}} \big) \circ \Theta_{\zeta^{(n-1)}}.
  \end{align*}
 Because $X^{(n)}_t \, \1_{\{ t < \zeta^{(n)}\}}$ is $\sF^{(n)}_{\zeta^{(n)}-}$-measurable,
 Lemma~\ref{lem:random shift is measurable II} shows that the above function is indeed $\sF^{(n)}_{\zeta^{(n)}-}$-measurable.
 Therefore, we have $\big( \pi^n \big)^{-1} \big( \sF^{n}_{\zeta^n-} \big) \subseteq \sF^{(n)}_{\zeta^{(n)}-}$,
 and as $K^n (\,\cdot\,, dy)$ is $\sF^{n}_{[\zeta^n-]}$-measurable and $\pi^n$ is a projection, $K^n \circ \pi^n$ is $\sF^{(n)}_{\zeta^{(n)}-}$-measurable.
 
 It remains to prove that the shift invariance also lifts from $K^n$ to $K^n \circ \pi^n$:
 Fix $t \geq 0$ and let $N^n$ be a null set on $\sF^n$ such that, for all $\omega^n \in \complement N^n$,
  \begin{align*}
    K^n \circ \Theta^n_t (\omega^n) = K^n(\omega^n), \quad \text{if $t < \zeta^n(\omega^n)$}.
  \end{align*}
 But then $N^{(n)} := (\pi^n)^{-1} (N^n)$ is a null set on $\sF^{(n)}$, because
  \begin{align*}
   \PV^{(n)} \big( (\pi^n)^{-1} (N^n) \big) = \PV^n(N^n) = 0,
  \end{align*}
 and for all $\omega = (\omega^1, \ldots, \omega^n) \in \complement N^{(n)}$ (thus, $\omega^n \in \complement N^n$), we have for $t < \zeta^{(n)}(\omega)$:
  \begin{align*}
    (K^n \circ \pi^n) \circ \Theta^{(n)}_t (\omega) 
   & = \begin{cases}
        K^n (\omega^n), & t < \zeta^{(n-1)}(\omega), \\
        K^n \circ \Theta^n_{t - \zeta^{(n-1)}(\omega)}(\omega^n), & 0 \leq t - \zeta^{(n-1)}(\omega) <  \zeta^{n}(\omega)
       \end{cases} \\
   & = (K^n \circ \pi^n) (\omega),
  \end{align*}
 where we used the shift invariance of $K^n$ for the last identity.
\end{proof}

We are ready to prove the extension of Theorem~\ref{theo:concatenation two} to finitely many processes:

\begin{proof}[Proof of Theorem~\ref{theo:concatenation finite}]
 The case $m = 2$ is already proved, see Theorem~\ref{theo:concatenation two}.
 
 Assume now that, for some $m \in \N$, the process $X^{(m)}$ resulting from the concatenation of $X^1, \ldots, X^m$ via the transfer kernels $K^1, \ldots, K^{m-1}$
 is a right process and satisfies for all $n \in \{1, \ldots, m-1\}$, $x \in E^{(n)}$, $f \in b\sE^{n+1}$,
 with the revival time $R^{(n)} := \inf\{ t \geq 0: X^{(m)} \in E^{(n+1)} \}$ of $X^{(m)}$:
  \begin{align} \label{eq:concatenation finite, proof I}
   \EV_x \big( f(X^{(m)}_{R^{(n)}}) \, \1_{\{R^{(n)} < \infty \}} \, \big| \, \sF^{(m)}_{R^{(n)}-} \big) = K^n f \circ \pi^n \, \1_{\{R^{(n)} < \infty \}}.
  \end{align}
  
 Let $X^{(m+1)}$ be the concatenation of $X^{(m)}$ and $X^{m+1}$ via the transfer kernel $K^{(m)} := K^m \circ \pi^m$.
 By the pathwise definitions at the beginning of sections~\ref{sec:concatenation two} and~\ref{sec:concatenation finite},
 $X^{(m+1)}$ is equal to the process $X$ arising from the concatenation of
 $X^1, \ldots, X^m, X^{m+1}$ via the transfer kernels $K^1, \ldots, K^{m-1}, K^m$.
%
  In particular, the initial measures $\PV^{(m+1)}_x$, $\PV_x$ of $X^{(m+1)}$, $X$ respectively, coincide for all $x \in E^{(m+1)}$.
  
  Now Theorem~\ref{theo:concatenation two} states that $X = X^{(m+1)}$ is a right process, 
  and that,
  with the revival time $R^m = \inf \{ t \geq 0: X_t \in E^{m+1} \} =: R^{(m)}$,
  it satisfies, with the projection $\pi^{(m)} \colon \Omega \rightarrow \Omega^1 \times \cdots \times \Omega^m$ to the first $m$ coordinates:
     \begin{align*}
       \EV_x \big( f(X_{R^m}) \, \1_{\{R^m < \infty \}} \, \big| \, \sF_{R^m-} \big)
       & = \EV^{(m+1)}_x \big( f(X^{(m+1)}_{R^{(m)}}) \, \1_{\{R^{(m)} < \infty \}} \, \big| \, \sF^{(m+1)}_{R^{(m)}-} \big) \\
       & = (K^m \circ \pi^m) f \circ \pi^{(m)} \, \1_{\{R^{(m)} < \infty \}} \\
       & = (K^m  f) \circ \pi^{m} \, \1_{\{R^m < \infty \}}.
     \end{align*}
  Assumption \eqref{eq:concatenation finite, proof I} for $X^{(m)}$ concludes the proof, as we get for $n \in \{1, \ldots, m-1\}$:
     \begin{align*}
       (K^n  f) \circ \pi^{n} \, \1_{\{R^n < \infty \}}
       & = \EV^{(m)}_x \big( f(X^{(m)}_{R^{(n)}}) \, \1_{\{R^{(n)} < \infty \}} \, \big| \, \sF^{(m)}_{R^{(n)}-} \big) \circ \pi^{(m)} \\
       & = \EV_x \big( f(X_{R^n}) \, \1_{\{R^n < \infty \}} \, \big| \, \sF_{R^n-} \big).
     \end{align*}   
  Here, the equality of both conditional expectations is seen as follows:
  Because $R^n = R^{(n)} \circ \pi^{(m)}$ and $X_t = X^{(m)}_t \circ \pi^{(m)}$ hold for all $t < R^{(m)}$,
  we have the relation $X_{R^n} = X^{(m)}_{R^n} \circ \pi^{(m)}$. The $\sigma$-algebras $\sF_{R^n-}$ and $\sF^{(m)}_{R^{(n)}-}$
  are generated by the multiplicatively closed classes of functions
   \begin{align*}
    J & := f_1(X_{t_1}) \cdots f_k(X_{t_k}) \, \1_{ \{ t < R^n \} },  \\
    J^{(m)} &:= f_1(X^{(m)}_{t_1}) \cdots f_k(X^{(m)}_{t_k}) \, \1_{ \{ t < R^{(n)} \} },
   \end{align*}
  with $0 \leq t_1 < \cdots < t_k \leq t$, $f_1, \ldots, f_k \in b\sE$. 
  It is immediate that $J = J^{(m)} \circ \pi^{(m)}$.
  Therefore, the integrals of both functions are the same (over their respective spaces), that~is, we obtain 
  \begin{align*}
    \EV_x \big( f(X_{R^m}) \, \1_{\{R^m < \infty \}} \, J \big)
    & = \EV^{(m)}_x \big( f(X_{R^{(m)}}) \, \1_{\{R^{(m)} < \infty \}} \, J^{(m)} \big) \\
    & = \EV^{(m)}_x \big( \EV^{(m)}_x \big( f(X^{(m)}_{R^{(n)}}) \, \1_{\{R^{(n)} < \infty \}} \, \big| \, \sF^{(m)}_{R^{(n)}-} \big) \, J^{(m)} \big) \\
    & = \EV_x \big( \EV^{(m)}_x \big( f(X^{(m)}_{R^{(n)}}) \, \1_{\{R^{(n)} < \infty \}} \, \big| \, \sF^{(m)}_{R^{(n)}-} \big) \circ \pi^{(m)} \, J \big).
  \end{align*}  
  On the other hand,  $\pi^{(m)}$ is $\sF_{R^n-}/\sF^{(m)}_{R^{(m)}-}$-measurable, because for all $f \in b\sE$, 
   \begin{align*}
     f(X_t) \, \1_{\{t < R^n\}} = f(X^{(m)}_t) \, \1_{\{t < R^{(n)}\}} \circ \pi^{(m)},
   \end{align*}
  which yields the $\sF_{R^n-}$-measurability of $\EV^{(m)}_x \big( f(X^{(m)}_{R^{(n)}}) \, \1_{\{R^{(n)} < \infty \}} \, \big| \, \sF^{(m)}_{R^{(n)}-} \big) \circ \pi^{(m)}$.
\end{proof}

\subsection{Concatenation of Countably Many Processes}

We are ready to turn to the concatenation of the processes $(X^n, n \in \N)$ via the transfer kernels $(K^n, n \in \N)$:
We assume the topological union $E = \bigcup_{n \in \N} E^n$  of the disjoint spaces $(E^n, n \in \N)$ to be a Radon space.
For instance, this is the case if the spaces $E^n$, $n \in \N$, are Lusin,
see \cite[Corollary to Lemma~II.5]{Schwartz73}. Adjoin a point $\Delta \notin E$ as a new, isolated point and form $E_\Delta := E \cup \{\Delta\}$.

Following the construction of section~\ref{sec:concatenation finite}, let $\zeta^{(n)}$ be given as in equation~\eqref{eq:zeta_n nth killing time} for each $n \in \N$.
We define the process $X_t \colon \Omega \rightarrow E_\Delta$ and
the family of shift operators $(\Theta_t, t \geq 0)$ for $X$ 
on $\Omega := \prod_{n \in \N} \Omega^n$ by setting for all $t \geq 0$, $\omega = (\omega^1, \omega^2, \ldots) \in \Omega$ with
$\zeta^{(n-1)}(\omega) \leq t < \zeta^{(n)}(\omega)$, $n \in \N$,
 \begin{align*}
  X_t (\omega) & := X^n_{t -\zeta^{(n-1)}(\omega)}(\omega^n), \\
  \Theta_t (\omega) & := \big( [\Delta^1], \ldots, [\Delta^{n-1}], \Theta^n_{t - \zeta^{(n-1)}(\omega)}(\omega^n), \omega^{n+1}, \omega^{n+2}, \ldots \big),
 \end{align*}
as well as $X_t(\omega) := \Delta$ and $\Theta_t(\omega) := \big( [\Delta^1], [\Delta^2], [\Delta^3], \ldots \big)$ for all $t \geq \sum_{n \in \N} \zeta^n(\omega^n)$.
The right continuity of all underlying processes $X^n$, $n \in \N$, yields the right continuity of~$X$.

Set $\sF := \bigotimes_{n \in \N} \sF^n$, and introduce the measures $(\PV_x, x \in E)$ on $(\Omega, \sF)$
by constituting a transition between the
subprocesses' distributions $(\PV^n_x, x \in E^n)$, $n \in \N$, via the transfer kernels $(K^n, n \in \N)$.
To this end, we define the probability measures $(\PV_x, x \in E)$ as projective limits of the following prescriptions:
For any $m \in \N$ and $H \in b(\sF^1 \otimes \cdots \otimes \sF^m)$, we set for $x \in E^1$
 \begin{align*}
    \EV_x(H)  := \int H(\omega^1, \ldots, \omega^m) \, &  \PV^m_{x^m}(d\omega^m) \, K^{m-1}(\omega^{m-1},dx^m) \, \PV^{m-1}_{x^{m-1}}(d\omega^{m-1})  \\
                                               &  \cdots ~ \PV^2_{x^2}(d\omega^2) \, K^{1}(\omega^{1},dx^2) \, \PV^{  1}_{x}(d\omega^{1}),
 \end{align*}
while for $x \in E^n$, $n \geq 2$, we set 
 \begin{align*} 
    \EV_x(H)  := \int H(\omega^1, \ldots, \omega^m) \, & \PV^m_{x^m}(d\omega^m) \, K^{m-1}(\omega^{m-1},dx^m) \, \PV^{m-1}_{x^{m-1}}(d\omega^{m-1}) \\
                               &    \cdots ~ \PV^{n+1}_{x^{n+1}}(d\omega^{n+1}) \, K^{n}(\omega^{n},dx^{n+1}) \, \PV^{n}_{x}(d\omega^{n}) \\
                               &    \delta_{[\Delta^{n-1}]}(d\omega^{n-1}) \cdots \delta_{[\Delta^{1}]}(d\omega^{1}).
 \end{align*}
An easy calculation shows that the above definitions admit consistency and therefore, by the Kolmogorov existence theorem,
exist as measures on $(\Omega, \sF)$. 

We are going to prepare the main method for the proof that $X$ is a right process.
A stability result for right processes, which will be made rigorous in Lemma~\ref{lem:right process if subprocesses are right} below, 
states the following: Assume we are given a stochastic process $X$ and an increasing sequence of terminal times $(R^n, n \in \N)$.
If process $X$ killed at $R^n$ is a right process for every $n \in \N$, then $X$ killed at $R := \lim_n R^n$ is a right process as well.
This result is then directly applicable in our context, 
because, for every $n \in \N$, the concatenated process $X$ killed at the $n$-th revival time $R^n$ 
  \begin{align*}
   R^n & :=  \inf\Big\{ t \geq 0: ~ X_t \in \bigcup_{m = n+1}^\infty E^{m} \Big\} = \zeta^{(n)} \\
       & \ =   \inf\big\{ t \geq 0: ~ X_t \in E^{n+1} \big\} \qquad \text{$\PV_x$-a.s.\ for $\textstyle x \in \bigcup_{m \leq n} E^m$}
  \end{align*}
is just the finite concatenation
of $X^1, \ldots, X^n$ via $K^1, \ldots, K^{n-1}$, which is a right process by the results of section~\ref{sec:concatenation finite}.
Thus, $X$ killed at $\lim_n R^n = \sum_n \zeta^n$ (which equals $X$ by construction) is proved to be a right process.

\begin{lemma} \label{lem:right process if subprocesses are right}
 Let $(X_t, t \geq 0)$ be a right continuous stochastic process on a measurable space $(\Omega, \sF)$ with values in a Radon space~$E$,
 $(\PV_x, x \in E)$ be a family of probability measures on a measurable space $(\Omega, \sF)$,
 $(R^n, n \in \N)$ be an increasing sequence of random times with $R := \sup_{n \in \N} R^n$, 
 and $(E^{R,n}, n \in \N)$ be an increasing sequence of Radon spaces.
 Define the processes $(X^{R,n}_t, t \geq 0)$, $n \in \N$, and $(X^R_t, t \geq 0)$ on~$\Omega$ by 
  \begin{align*}
   X^{R,n}_t = \begin{cases}
                X_t, & t < R^n, \\
                \Delta,  & t \geq R^n,
               \end{cases}
  \quad \text{and} \quad
   X^R_t = \begin{cases}
                X_t, & t < R, \\
                \Delta,  & t \geq R,
             \end{cases}   
   \quad t \geq 0.
  \end{align*}
 Then $X^R = \big( \Omega, \sF, (\sF^R_t)_{t \geq 0}, (X^R_t)_{t \geq 0}, (\Theta^R_t)_{t \geq 0}, (\PV_x)_{x \in E} \big)$,
 with $(\sF^R_t, t \geq 0)$ being the natural filtration of~$X^{R}$ and $(\Theta^R_t, t \geq 0)$ being an arbitrary family of shift operators for~$X$,
 is a right process on~$E$, if the following conditions are fulfilled:
 \begin{enumerate}
  \item $(R^n, n \in \N)$ is a sequence of stopping times over $(\sF^R_t, t \geq 0)$; 
  \item $(E^{R,n}, n \in \N)$ increases to~$E$, that is, $\bigcup_{n \in \N} E^{R,n} = E$;
  \item for each $n \in \N$, there exist a filtration $(\sF^{R,n}_t, t \geq 0)$ on $(\Omega, \sF)$ and 
        a family of operators $(\Theta^{R,n}_t, t \geq 0)$ on~$\Omega$, such that
         \begin{align*}
          X^{R,n} := \big( \Omega, \sF, (\sF^{R,n}_t)_{t \geq 0}, (X^{R,n}_t)_{t \geq 0}, (\Theta^{R,n}_t)_{t \geq 0}, (\PV_x)_{x \in E^{R,n}} \big)
         \end{align*}
        is a right process on~$E^{R,n}$;
  \item for each $n \in \N$, $R^n$ is a terminal time for the process $X^{R,n}$,
        satisfying $R^n > 0$ $\PV_x$-a.s.\ for all $x \in E^{R,n}$.
 \end{enumerate}
 \end{lemma}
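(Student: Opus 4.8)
The plan is to verify the two defining ingredients of a right process for $X^R$ separately: first that $X^R$ is a normal Markov process with a measurable transition semigroup, and then the continuity hypothesis \eqref{eq:HD2}, for which Sharpe's criterion \cite[Theorem 7.4]{Sharpe88} reduces matters to the a.s.\ right continuity of $t\mapsto U^R_\alpha f(X^R_t)$ for $\alpha>0$ and $f\in b\cC_d(E)$. The whole argument rests on the approximation $R^n\uparrow R$: writing $T^R_t f(x):=\EV_x(f(X^R_t))$ and $T^{R,n}_t f(x):=\EV_x(f(X^{R,n}_t))$, and likewise for the resolvents $U^R_\alpha$, $U^{R,n}_\alpha$, the convention $f(\Delta)=0$ gives $f(X^{R,n}_t)=f(X_t)\,\1_{\{t<R^n\}}$ and $f(X^R_t)=f(X_t)\,\1_{\{t<R\}}$, so that for $f\in bp\sE$ one has the monotone convergences $T^{R,n}_t f\uparrow T^R_t f$ and $U^{R,n}_\alpha f\uparrow U^R_\alpha f$ as $n\to\infty$, since $\{t<R^n\}\uparrow\{t<R\}$.

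For the Markov part, I would first record that every $x\in E$ lies in some $E^{R,n_0}$ by hypothesis (ii), so that for $n\ge n_0$ the measures $\PV_x$ and the Markov property of the right process $X^{R,n}$ are available. Normality is where hypothesis (iv) enters: since $R^n>0$ $\PV_x$-a.s.\ we get $R>0$, hence $X^R_0=X_0$, and $X_0=X^{R,n}_0=x$ by normality of $X^{R,n}$. Measurability of $x\mapsto T^R_t\1_B$ follows as an increasing limit of the measurable maps $x\mapsto T^{R,n}_t\1_B$, and Chapman--Kolmogorov for $T^R$ is obtained by passing the monotone limit through the nested compositions of the $T^{R,n}$. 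To identify the finite-dimensional distributions, note that $\prod_i f_i(X^{R,n}_{t_i})\to\prod_i f_i(X^R_{t_i})$ pointwise on $\Omega$ for bounded $f_i$ with $f_i(\Delta)=0$; dominated convergence then expresses $\EV_x(\prod_i f_i(X^R_{t_i}))$ as the limit of the Markovian finite-dimensional distributions of $X^{R,n}$, and a further monotone passage $T^{R,n}\to T^R$ through the iterated integrals shows that these are governed by $T^R$. A right-continuous, normal process with Markovian finite-dimensional distributions is Markov with respect to its natural filtration; augmenting this filtration in the usual way, and using \eqref{eq:HD2} established below for right continuity, then furnishes the filtration $(\sF^R_t)$ required of a right process.

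For \eqref{eq:HD2} it suffices, by linearity and by splitting into positive and negative parts, to treat $f\in b\cC_d(E)$ with $f\ge0$. The key step is to show that $U^R_\alpha f$ is $\alpha$-excessive for each fixed level process $X^{R,n}$, i.e.\ $U^R_\alpha f\in\sS^{R,n}_\alpha$. For $m\ge n$ the function $g:=U^{R,m}_\alpha f$ is $\alpha$-excessive for $X^{R,m}$, and since $R^n\le R^m$ gives $T^{R,n}_t\le T^{R,m}_t$ on $bp\sE$, it is $\alpha$-supermedian for $X^{R,n}$; moreover
\[
 e^{-\alpha t}T^{R,n}_t g \;=\; e^{-\alpha t}T^{R,m}_t g \;-\; e^{-\alpha t}\,\EV_\cdot\big(g(X_t);\,R^n\le t<R^m\big),
\]
where the first term increases to $g$ as $t\downarrow0$ because $g\in\sS^{R,m}_\alpha$, and the second tends to $0$ because $\{R^n\le t<R^m\}\downarrow\emptyset$ as $t\downarrow0$ (again using $R^n>0$); hence $e^{-\alpha t}T^{R,n}_t g\uparrow g$, so $g\in\sS^{R,n}_\alpha$. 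Since $U^{R,m}_\alpha f\uparrow U^R_\alpha f$ and $\sS^{R,n}_\alpha$ is closed under increasing limits, $U^R_\alpha f\in\sS^{R,n}_\alpha$, and therefore \eqref{eq:HD2} for $X^{R,n}$ yields that $t\mapsto U^R_\alpha f(X^{R,n}_t)$ is a.s.\ right continuous.

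It then remains to transfer this pathwise, which I expect to be the main obstacle. Off the null set $N:=\bigcup_n N_n$, where $N_n$ carries the exceptional paths for level $n$, I would argue as follows: for $t\ge R(\omega)$ one has $X^R_t(\omega)=\Delta$ and $U^R_\alpha f(\Delta)=0$, so the map is locally constant there; for $t<R(\omega)$ choose $n$ with $R^n(\omega)>t$, so that $X^R_s(\omega)=X^{R,n}_s(\omega)$ on a right neighbourhood of $t$, whence right continuity of $s\mapsto U^R_\alpha f(X^{R,n}_s)$ at $t$ transfers to $s\mapsto U^R_\alpha f(X^R_s)$. This gives \eqref{eq:HD2} for $X^R$ and completes the proof. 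The delicate points to control carefully are this gluing, in particular the fact that the single countable union $N$ serves all $t<R$ simultaneously, and, in the Markov part, the passage from the natural filtration to a properly augmented right-continuous one; the comparison and excessivity step, by contrast, is made painless precisely by the strict positivity $R^n>0$ of hypothesis (iv).
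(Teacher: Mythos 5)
Your proof is correct, and its overall skeleton is the one the paper uses: prove normality and the Markov property of $X^R$ by approximation through the killed processes $X^{R,n}$, show that the relevant potentials of $X^R$, restricted to $E^{R,n}$, are $\alpha$-excessive for $X^{R,n}$, and glue the right continuity along $R^n \uparrow R$ exactly as in your final step (the paper's gluing is the same two-case argument, with $f(\Delta)=0$ and $X^R_t = X^{R,n}_t$ on $\{t<R^n\}$). The two central steps are, however, implemented genuinely differently. For the Markov property, the paper argues pathwise: starting from $\{s+t<R\}=\bigcup_n\{s+t<R^n\}$ and dominated convergence, it rewrites $f(X^{R,n}_{s+t})\,\1_{\{s+t<R^n\}}$ as a shifted functional via the operators $\Theta^{R,n}_t$, using the terminal time property of $R^n$ from hypothesis (iv) and the stopping time property, then applies the Markov property of $X^{R,n}$ and reverses the steps; your finite-dimensional-distribution argument with the monotone limits $T^{R,n}_t f \uparrow T^R_t f$ reaches the same conclusion from hypothesis (iii) alone, so you never invoke the terminal-time half of (iv) nor hypothesis (i) --- a small economy of hypotheses (they of course remain available, and the paper's route does use them). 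For the excessivity step, the paper computes directly, using the freshly established Markov property of $X^R$ together with the stopping time property (i), that $e^{-\alpha t}\,T^{R,n}_t\,U^R_\alpha h = \EV_{\,\cdot\,}\big(\int_t^\infty e^{-\alpha s}\,h(X^R_s)\,ds;\ t<R^n\big)$, whence supermedianity and, by $R^n>0$ and monotone convergence, excessivity of $\restr{U^R_\alpha h}{E^{R,n}}$; your two-level comparison $e^{-\alpha t}\,T^{R,n}_t g = e^{-\alpha t}\,T^{R,m}_t g - e^{-\alpha t}\,\EV_{\,\cdot\,}\big(g(X_t);\ R^n\leq t<R^m\big)$ with $g=U^{R,m}_\alpha f$, $m\geq n$, followed by $m\to\infty$, is a more elementary derivation of the same fact: it uses only the excessivity of $U^{R,m}_\alpha f$ for $X^{R,m}$ and again $R^n>0$, not the Markov property of $X^R$, so your \eqref{eq:HD2} argument is logically independent of your Markov argument. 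Finally, you reduce to potentials $U^R_\alpha f$ of $f\in b\cC_d(E)$ via the criterion of \cite[Theorem 7.4]{Sharpe88}, whereas the paper treats an arbitrary $f\in\sS_\alpha(X^R)$ directly through the representation $f=\sup_m U^R_\alpha h_m$ with $h_m\in bp\sE$ --- the same substance, since that representation is what underlies the criterion. The delicate points you flag (one countable union of null sets serving all $t<R$ simultaneously, and the passage from the natural to the augmented right-continuous filtration) are treated at exactly the same level of detail in the paper's proof, so nothing is missing.
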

\begin{proof} \
 The process $X^R$ is normal, because for any $x \in E$, with $n \in \N$ such that $x \in E^{R,n}$, the normality of $X^{R,n}$ gives
  \begin{align*}
   \PV_x( X^R_0 = x ) = \PV_x ( X^{R,n}_0 = x, R^n > 0 ) = 1.
  \end{align*}
  
 Turning to the Markov property of $X^R$,
    let $s, t \geq 0$ and $f \in b\sE$.
    For any $k \in \N$, $0 = t_0 < t_1 < t_2 < \cdots < t_k \leq t$, $g_0 \in b\sE$, $g_1, \ldots, g_k \in b\sE$, set
     \begin{align*}
      J^R & := g_0(X^R_{t_0}) \, g_1(X^R_{t_1}) \cdots g_k(X^R_{t_k}), \\
      J^{R,n} & := g_0(X^{R,n}_{t_0}) \, g_1(X^{R,n}_{t_1}) \cdots g_k(X^{R,n}_{t_k}), \quad n \in \N. 
     \end{align*}
  As the set of functions of the type $J^R$ forms a multiplicatively closed generator of~$b\sF^R_t$,
  and as $\EV_{X^R_t} \big( f(X^R_{s}) \big)$ is measurable with respect to the natural filtration $(\sF^R_t, t \geq 0)$, it suffices to show that 
   \begin{align*}
    \EV_x \big( f(X^R_{s+t}) \cdot J^R \big) = \EV_x \Big( \EV_{X^R_t} \big( f(X^R_{s}) \big) \cdot J^R \Big).
   \end{align*}
  We start by observing that $\{s+t < R\} = \bigcup_n \{s+t < R^n\}$ and $X^R_{s+t} = X^{R,n}_{s+t}$ on~$\{ s+t < R^n \}$, 
  so Lebesgue's dominated convergence theorem yields
     \begin{align*}
       \EV_x \Big( f(X^R_{s+t}) \cdot J^R \Big) 
         & = \lim_n \EV_x \Big( f(X^{R,n}_{s+t}) \cdot J^{R,n}; \, s+t < R^n \Big).
     \end{align*}   
    By employing both
    the terminal time property and the stopping time property of~$R^n$ with respect to~$X^{R,n}$ next, we obtain
    \begin{align*}
         &   \lim_n \EV_x \Big( f(X^{R,n}_{s+t}) \cdot J^{R,n}; \, s+t < R^n \Big) \\
         & = \lim_n \EV_x \Big( f(X^{R,n}_{s}) \circ \Theta^{R,n}_t \cdot J^{R,n}; \, s < R^{R,n} \circ \Theta^{R,n}_t, t < R^n \Big) \\
         & = \lim_n \EV_x \Big( \EV_x \big( f(X^{R,n}_{s}) \circ \Theta^{R,n}_t ; \, s < R^n \circ \Theta^{R,n}_t \, \big| \, \sF^{R,n}_t \big) \cdot J^{R,n} ; \, t < R^n \Big).
    \end{align*}    
    Now, we are able to apply the Markov property of $X^{R,n}$, which yields
    \begin{align*}
         & \lim_n \EV_x \Big( \EV_x \big( f(X^{R,n}_{s}) \circ \Theta^{R,n}_t ; \, s < R^n \circ \Theta^{R,n}_t \, \big| \, \sF^{R,n}_t \big) \cdot J^{R,n} ; \, t < R^n \Big) \\
         & = \lim_n \EV_x \Big( \EV_{X^{R,n}_t} \big( f(X^{R,n}_{s}) ; \, s < R^n \big) \cdot J^{R,n}; \, t < R^n \Big),
    \end{align*}     
    and by carrying out the above steps in reverse order, we conclude that    
    \begin{align*}
         &   \lim_n \EV_x \Big( \EV_{X^{R,n}_t} \big( f(X^{R,n}_{s}) ; \, s < R^n \big) \cdot J^{R,n}; \, t < R^n \Big) \\
         & = \EV_x \Big( \EV_{X^R_t} \big( f(X^R_{s}) ; \, s < R \big) \cdot J^R; \, t < R \Big) \\
         & = \EV_x \Big( \EV_{X^R_t} \big( f(X^R_{s}) \big) \cdot J^R \Big).
     \end{align*}
     
    It remains to verify that $t \mapsto f(X^R_t)$ is a.s.\ right continuous for all $\alpha$-excessive functions~$f$.
    To this end, let $\sS_\alpha(X^{R,n})$, $\sS_\alpha(X^R)$, $\alpha > 0$, be the sets of all $\alpha$-excessive functions,
    $T^{n}_t$, $T^{R}_t$, $t \geq 0$, be the transition operators, and
    $U^{n}_\alpha$, $U^{R}_\alpha$, $\alpha > 0$, be the $\alpha$-potential operators 
    of the processes $X^{R,n}$, $X^R$ respectively, that is,
      \begin{align*}
       U^{n}_\alpha h (x) = \EV_x \Big( \int_0^\infty e^{-\alpha s} \, h(X^{R,n}_s) \, ds \Big), \quad h \in p\sE, n \in \N.
      \end{align*}
    Now let $f \in \sS_\alpha(X^R)$. Then there exists a sequence $(h_m, m \in \N)$ in $bp\sE$ such that
      \begin{align*}
       f = \sup_m U^{R}_\alpha h_m.
      \end{align*}
    Of course, $U^{R}_\alpha h_m$ is in $\sS_\alpha(X^R)$ (see, e.g., \cite[Proposition 2.2]{ChungWalsh05}).
    However, we are going to prove now that this potential, as a function restricted to~$E^{R,n}$, is also in~$\sS_\alpha(X^{R,n})$.
    As~$X^{R,n}$ is a subprocess of $X^R$, we have
      \begin{align*}
       e^{-\alpha t} \, T^{n}_t \, U^{R}_\alpha h_m
         & = \EV \Big( e^{-\alpha t} \, U^{R}_\alpha h_m(X^{R,n}_t) \Big) \\
         & = \EV \Big( e^{-\alpha t} \, U^{R}_\alpha h_m(X^R_t) ; \, t < R^n \Big) \\
         & = \EV \Big( e^{-\alpha t} \, \EV_{X^R_t} \Big( \int_0^\infty e^{-\alpha s} \, h_m(X^R_s) \, ds \Big) ; \, t < R^n \Big).
      \end{align*}  
    The Markov property of $X^R$ and the stopping time property of~$R^n$ with respect to~$X^{R}$ imply that this is equal to
      \begin{align*}
       e^{-\alpha t} \, T^{n}_t \, U^{R}_\alpha h_m
         & = \EV \Big( \EV \Big( \int_t^\infty e^{-\alpha s} \, h_m(X^R_s) \, ds \, \big| \, \sF^R_t \Big) ; \, t < R^n \Big) \\
         & = \EV \Big( \int_t^\infty e^{-\alpha s} \, h_m(X^R_s) \, ds ; \, t < R^n \Big).
      \end{align*}   
    Therefore, we have $e^{-\alpha t} \, T^{n}_t \, U^{R}_\alpha h_m \leq U^{R}_\alpha h_m$ for all $t \geq 0$, and
    because $R^n > 0$ holds $\PV_x$-a.s.\ for all $x \in E^{R,n}$, Levi's monotone convergence theorem yields
     \begin{align*}
      \lim_{t \downarrow 0} e^{-\alpha t} \, T^{n}_t \, U^{R}_\alpha h_m 
        & = \EV \Big( \int_0^\infty e^{-\alpha s} \, h_m(X^R_s) \, ds \Big) \\
        & = U^{R}_\alpha h_m
     \end{align*}
    on $E^{R,n}$. Thus $\restr{U^{R}_\alpha h_m}{E^{R,n}} \in \sS^\alpha(X^{R,n})$ for each $n \in \N$, and as the set of excessive functions is closed under suprema, we have
     \begin{align*}
      \restr{f}{E^{R,n}} = \sup_m \left( \restr{U^{R}_\alpha h_m}{E^{R,n}} \right) \in \sS_\alpha(X^{R,n}).
     \end{align*}

    We are now able to conclude that $X$ satisfies \eqref{eq:HD2}:
    We have just seen that, for any $f \in \sS_\alpha(X^R)$, $f$ restricted on $E^{R,n}$ is $\alpha$-excessive for $X^{R,n}$ for all $n \in \N$,
    so as $X^{R,n}$ is a right process, the map $t \mapsto f(X^{R,n}_t)$ is a.s.\ right continuous for each $n \in \N$.
    With $X^R_t = X^{R,n}_t$ on $t < R^n$, $\lim_n R^n = R$ and $f(\Delta) = 0$, we immediately get that $t \mapsto f(X^R_t)$ is a.s.\ right continuous.    
\end{proof}

Let $X$ be the concatenation of the right processes $(X^n, n \in \N)$ via the transfer kernels $(K^n, n \in \N)$, as constructed above,
and $(R^n, n \in \N)$ be the revival times of $X$.
As announced, we are going to apply Lemma~\ref{lem:right process if subprocesses are right} with $X^{R,n}$ being the subprocesses
of~$X$ killed at the revival times $R^n$, that is, we consider for all $\omega = (\omega^1, \omega^2, \ldots) \in \Omega$, $t \geq 0$, 
 \begin{equation} \label{eq:def X_R_n}
 \begin{aligned}
     X^{R,n}_t(\omega)
  & := \begin{cases}
      X_t (\omega), & t < R^n, \\
      \Delta,  & t \geq R^n
    \end{cases} \\
  & = \begin{cases}
      X^1_t(\omega^1),				 & t < \zeta^{(1)}(\omega), \\
      X^2_{t - \zeta^{(1)}(\omega)}(\omega^2),		 & \zeta^{(1)}(\omega)) \leq t < \zeta^{(2)}(\omega), \\
      ~ \vdots					 & ~ \vdots \\
      X^n_{t - \zeta^{(n-1)}(\omega)}(\omega^n),  & \zeta^{(n-1)}(\omega) \leq t < \zeta^{(n)}(\omega) \\
      \Delta,                                        & t \geq \zeta^{(n)}(\omega),
    \end{cases}
 \end{aligned}
 \end{equation}
equipped with shift operators $(\Theta^{R,n}_t, t \geq 0)$ defined by
 \begin{align*}
   & \Theta^{R,n}_t(\omega) :=  \\
   &  \begin{cases}
	\big( \Theta^1_t(\omega^1), \omega^2, \ldots \big),				 & t < \zeta^{(1)}(\omega), \\
	\big( [\Delta^1], \Theta^2_{t - \zeta^{(1)}(\omega)} (\omega^2), \omega^3, \ldots \big), 	 & \zeta^{(1)}(\omega) \leq t < \zeta^{(2)}(\omega), \\
	~ \vdots & ~ \vdots \\
	\big( [\Delta^1], \ldots, [\Delta^{n-1}], \Theta^n_{t - \zeta^{(n-1)}(\omega)}(\omega^n), \omega^{n+1}, \ldots \big),  & \zeta^{(n-1)}(\omega) \leq t < \zeta^{(n)}(\omega) \\
	\big( [\Delta^1], \ldots, [\Delta^{n-1}], [\Delta^{n}], \omega^{n+1}, \ldots \big),                      & t \geq \zeta^{(n)}(\omega).                                                                                                                   
     \end{cases}
 \end{align*}

We first need to show that the subprocesses $X^{R,n}$, $n \in \N$, fulfill the requirements of Lemma~\ref{lem:right process if subprocesses are right}.
In particular, they are right processes:
 
\begin{lemma} \label{lem:concatenation count, subprocesses are right}
 For every $n \in \N$, the process
  \begin{align*}
   X^{R,n} = \big( \Omega, \sF, (\sF^{R,n}_t)_{t \geq 0}, (X^{R,n}_t)_{t \geq 0}, (\Theta^{R,n}_t)_{t \geq 0}, (\PV_x)_{x \in E^{R,n}} \big), 
  \end{align*}
 with $(\sF^{R,n}_t, t \geq 0)$ being its natural filtration,
 is a right process on the state space $E^{(n)} := \bigcup_{j=1}^n E^j$.
\end{lemma}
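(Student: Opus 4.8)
The plan is to identify $X^{R,n}$ with the finite concatenation $X^{(n)}$ of $X^1,\ldots,X^n$ via $K^1,\ldots,K^{n-1}$ from Section~\ref{sec:concatenation finite}, transported to the larger sample space $\Omega = \prod_{m\in\N}\Omega^m$ by the projection $\pi^{(n)}\colon\Omega\to\Omega^1\times\cdots\times\Omega^n$ onto the first $n$ coordinates. Since $X^{(n)}$ is a right process by Theorem~\ref{theo:concatenation finite}, the assertion will follow once I show that $X^{R,n}$ is the pathwise image of $X^{(n)}$ under $\pi^{(n)}$ and that $\pi^{(n)}$ pushes each $\PV_x$ forward onto the corresponding finite concatenation measure $\PV^{(n)}_x$.

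First I would record the pathwise identities. Because $\zeta^{(k)}$ depends only on $\omega^1,\ldots,\omega^k$ and the case distinction defining $X^{R,n}$ in~\eqref{eq:def X_R_n} involves only the coordinates $\omega^1,\ldots,\omega^n$ (the process being frozen at $\Delta$ once $t \geq \zeta^{(n)}$), a direct comparison with the pathwise definition of the finite concatenation gives
\begin{align*}
 X^{R,n}_t = X^{(n)}_t \circ \pi^{(n)}, \qquad \pi^{(n)} \circ \Theta^{R,n}_t = \Theta^{(n)}_t \circ \pi^{(n)}, \qquad t \geq 0,
\end{align*}
where $X^{(n)}$ and $(\Theta^{(n)}_t, t \geq 0)$ denote the finite concatenation process and its shift operators; here one uses that shifting a dead path past its lifetime returns the dead path, so the two prescriptions agree also on $\{t \geq \zeta^{(n)}\}$. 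In particular $\zeta^{(n)} = R^n$, and the natural filtration of $X^{R,n}$ satisfies $\sF^{R,n}_t = (\pi^{(n)})^{-1}(\sF^{(n)}_t)$ after the usual augmentation, with $(\sF^{(n)}_t, t\geq0)$ the natural filtration of $X^{(n)}$.

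Next I would verify the measure pushforward $\PV_x \circ (\pi^{(n)})^{-1} = \PV^{(n)}_x$ for each $x \in E^{(n)}$. This is immediate from the projective-limit prescription defining $\PV_x$: for $x \in E^j$ with $j \leq n$ and any $H \in b(\sF^1\otimes\cdots\otimes\sF^n)$, evaluating that prescription at $m = n$ reproduces verbatim the initial measure $\PV^{(n)}_x$ of the finite concatenation, i.e.\ $\EV_x(H \circ \pi^{(n)}) = \EV^{(n)}_x(H)$. Consequently $X^{R,n}$ under $(\PV_x, x \in E^{(n)})$ and $X^{(n)}$ under $(\PV^{(n)}_x, x \in E^{(n)})$ share the same finite-dimensional distributions, transition operators $T_t$, and $\alpha$-potentials.

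Finally I would transfer the defining properties of a right process along $\pi^{(n)}$. Normality, the measurability of $x \mapsto T_t f(x)$, and the Markov property of $X^{R,n}$ relative to $\sF^{R,n}_t = (\pi^{(n)})^{-1}\sF^{(n)}_t$ all follow from the corresponding statements for $X^{(n)}$ by the change-of-variables formula along the measure-preserving map $\pi^{(n)}$. For~\eqref{eq:HD2}, the semigroups of $X^{R,n}$ and $X^{(n)}$ coincide, whence $\sS_\alpha(X^{R,n}) = \sS_\alpha(X^{(n)})$; for any such $f$ the map $t \mapsto f(X^{R,n}_t) = f(X^{(n)}_t)\circ\pi^{(n)}$ is $\PV_x$-a.s.\ right continuous because $t \mapsto f(X^{(n)}_t)$ is $\PV^{(n)}_x$-a.s.\ right continuous and $\pi^{(n)}$ carries $\PV_x$ to $\PV^{(n)}_x$. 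The one point requiring genuine care is the bookkeeping around the augmented natural filtrations: one must check that the $\PV_x$-null sets pulled back through $\pi^{(n)}$ are compatible with the augmentation of $\sF^{(n)}_t$, so that the conditional expectations expressing the Markov property transfer cleanly. Given the exact pushforward of measures established above, this is routine, and I do not expect any essential obstacle beyond it.
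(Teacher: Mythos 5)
Your proposal follows essentially the same route as the paper's proof: identify $X^{R,n}$ pathwise with $X^{(n)}\circ\pi^{(n)}$, verify the pushforward $\PV_x\circ(\pi^{(n)})^{-1}=\PV^{(n)}_x$ from the projective-limit prescription, transfer normality and the Markov property via the resulting equality of finite-dimensional distributions, and obtain \eqref{eq:HD2} from the coincidence of the transition operators (hence of the $\alpha$-excessive functions) together with the fact that $\pi^{(n)}$ pulls $\PV^{(n)}_x$-null sets back to $\PV_x$-null sets. The only addition beyond the paper's argument is the shift-operator intertwining, which is not actually needed since the Markov property is transferred distributionally; everything else is correct and matches the published proof.
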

\begin{proof}
 Consider $X^{(n)} = \big( \Omega^{(n)}, \sF^{(n)}, (\sF^{(n)}_t)_{t \geq 0}, (X^{(n)}_t)_{t \geq 0}, (\Theta^{(n)}_t)_{t \geq 0}, (\PV^{(n)}_x)_{x \in E^{(n)}} \big)$
 the concatenation of $X^1, \ldots, X^n$ with the transfer kernels $K^1, \ldots, K^{n-1}$. 
 Then $X^{(n)}$ is a right process on $E^{(n)}$ by Theorem~\ref{theo:concatenation finite}.
 
 Let $\pi^{(n)} \colon \Omega \rightarrow \Omega^{(n)}$ be the canonical projection onto $\Omega^{(n)}$.
 By checking the decomposition~\eqref{eq:def X_R_n} and the definition of $X^{(n)}$ in section~\ref{sec:concatenation finite},
 it is evident that
  \begin{align*} X^{R,n}_t = X^{(n)}_t \circ \pi^{(n)} \quad \text{for all $t \geq 0$, a.s.\ on $\Omega$.} \end{align*}
 The definitions of the measures $\PV_x$, $\PV_x^{(n)}$ for the countable and finite concatenations yield that for all $x \in E^{(n)}$,
  \begin{align*} \PV_x \circ ( \pi^{(n)} )^{-1} = \PV^{(n)}_x \quad \text{on $\sF^{(n)} = \sF^1 \otimes \cdots \otimes \sF^n$}. \end{align*}
 Thus, $X^{R,n}$ and $X^{(n)}$ have the same finite dimensional distributions (with respect to their corresponding measures $\PV$ and $\PV^{(n)}$):
  \begin{align} \label{eq:concatenation count, subprocesses are right, proof I} 
   \PV_x \circ \big( X^{R,n}_{t_1}, \ldots, X^{R,n}_{t_k} \big)^{-1} = \PV^{(n)}_x \circ \big( X^{(n)}_{t_1}, \ldots, X^{(n)}_{t_k} \big)^{-1}.
  \end{align}  
 This easily transfers the normality and Markov property from $X^{(n)}$ to $X^{R,n}$.
 Turning to \eqref{eq:HD2} for $X^{R,n}$, we observe that the $\alpha$-excessive functions of $X^{(n)}$ and $X^{R,n}$ coincide, 
 as the transition operators $T^{(n)}_t$, $T^{R,n}_t$, $t \geq 0$, of $X^{(n)}$, $X^{R,n}$ agree for all $f \in p\sE^{(n)}$, $x \in E^{(n)}$:
      \begin{align*} 
       T^{R,n}_t f(x) = \EV_x \big( f(X^{R,n}_t) \big) = \EV^{(n)}_x \big( f(X^{(n)}_t) \big) = T^{(n)}_t f(x).
     \end{align*}    
 But $X^{(n)}$ is a right process, so for any $f \in \sS_\alpha(X^{R,n})$,
    \begin{align*} 
        t \mapsto f \big( X^{R,n}_t \big) = f \big( X^{(n)}_t \circ \pi^{(n)} \big)
    \end{align*}
  is a.s.\ right continuous, as for any $\PV_x^{(n)}$-null~set $N$ in $\sF^{(n)}$, $(\pi^{(n)})^{-1}(N)$ is a $\PV_x$-null~set in~$\sF$.
\end{proof}

We are now able to use Lemma~\ref{lem:concatenation count, subprocesses are right} to lift Theorem~\ref{theo:concatenation finite}
to the concatenation of countably many processes:

\begin{proof}[Proof of Theorem~\ref{theo:concatenation countable}]
 Let $X^{R,n}$ be the processes as defined above Lemma~\ref{lem:concatenation count, subprocesses are right} for the revival times  
 $R^n$, $n \in \N$, equipped with their natural filtrations, on their state spaces $E^{R,n} := E^{(n)}$.
 Then the sequence $(R^n, n \in \N)$ increases to the lifetime of~$X$, and the sequence $(E^{R,n}, n \in \N)$ increases to $E = \bigcup_n E^n$.
 Furthermore, by Lemma~\ref{lem:concatenation count, subprocesses are right}, the process $X^{R,n}$ is a right processes on $E^{R,n}$ for every $n \in \N$,
 and being a subprocess of $X$, its natural filtration satisfies $\sF^{R,n} \subseteq \sF^R$. Finally, $R^n$ coincides with its lifetime, so it is a terminal time for~$X^{R,n}$,
 and being the first entry time of $X$ into a closed set, it is also a stopping time for~$X$.
 Thus, Lemma~\ref{lem:right process if subprocesses are right} is applicable, which shows that $X = X^R$ is a right process.
 
 It only remains to prove the revival formula given in Theorem~\ref{theo:concatenation countable}.
 To this end, we compare once again the processes $X^{R,n}$ and $X^{(n)}$ like in the proof of Lemma~\ref{lem:concatenation count, subprocesses are right}:
 
 As $X^{(n+1)}$ is the concatenation of $X^{(n)}$ and $X^{n+1}$ with transfer kernel $K^n \circ \pi^n$ (see section~\ref{sec:concatenation finite}), 
 Theorem~\ref{theo:concatenation finite} yields, with $R^{(n)} = \inf \{ t \geq 0: X^{(n+1)}_t \in E^{n+1} \}$:
  \begin{align*} 
    \EV^{(n+1)}_x \big( f(X^{(n+1)}_{R^{(n)}}) \, \1_{\{R^{(n)} < \infty \}} \, \big| \, \sF^{(n+1)}_{R^{(n)}-} \big) = K^n f \circ \pi^n \, \1_{\{R^{(n)} < \infty \}}.
  \end{align*}
 Checking the construction of $X$ and $X^{(n+1)}$, we observe that
  \begin{align*} R^{(n)} \circ \pi^{(n+1)} = R^n  \quad \text{and} \quad   X^{(n+1)}_{R^{(n)}} \circ \pi^{(n+1)} = X^{R,n+1}_{R^n}  \quad \text{a.s.\ on $\Omega$}. \end{align*}
 
 By definition, $\sF_{R^n-} = \sigma \big( \big\{ A \cap \{t < R^n\} : t \geq 0, A \in \sF_t \big\} \big)$, and this generator is $\cap$-stable, because
  for all $s, t \geq 0$, $A_s \in \sF_s$, $A_t \in \sF_t$, with $s \leq t$:
   \begin{align*} \big( A_s \cap \{s < R^n\} \big) \cap  \big( A_t \cap \{t < R^n\} \big) = \big( A_s \cap A_t \big) \cap \{t < R^n\}, \end{align*}
  and $A_s \cap A_t \in \sF_t$.
 Thus, it suffices to show that for all $t \geq 0$, $f \in b\sE$, $k \in \N$, $0 \leq t_1 < \cdots < t_k \leq t$, $g_1, \ldots, g_k \in b\sE$ with 
   \begin{align*}
     J         & := g_1(X_{t_1}) \cdots g_k(X_{t_k}) \cdot \1_{\{ t < R^n \}}, \\
     J^{R,n+1}   & := g_1(X^{R,n+1}_{t_1}) \cdots g_k(X^{R,n+1}_{t_k}) \cdot \1_{\{ t < R^n \}}, \\
     J^{(n+1)} & := g_1(X^{(n+1)}_{t_1}) \cdots g_k(X^{(n+1)}_{t_k}) \cdot \1_{\{ t < R^{(n)} \}}
   \end{align*}
 the following holds true, as $X_{R^n} = X^{R,n+1}_{R^n}$ a.s.:
   \begin{align*}
    \EV_x \big( f(X_{R^n}) \, \1_{\{R^n < \infty \}} \cdot J \big) 
    & = \EV_x \big( f(X^{R,n+1}_{R^n}) \, \1_{\{R^n < \infty \}} \cdot J^{R,n+1} \big) \\
    & = \EV^{(n+1)}_x \big( f(X^{(n+1)}_{R^{(n)}}) \, \1_{\{R^{(n)} < \infty \}} \cdot J^{(n+1)} \big) \\
    & = \EV^{(n+1)}_x \big( K^n f \circ \pi^n \, \1_{\{R^{(n)} < \infty \}} \cdot J^{(n+1)} \big) \\
    & = \EV_x \big( K^n f \circ \pi^n \, \1_{\{R^n < \infty \}} \cdot J \big).
   \end{align*}
 This completes the proof, as $K^n f \circ \pi^n$ is $\sF_{R^n-}$-measurable by Lemma~\ref{lem:lift of transfer kernel, finite}.
\end{proof}

\section{Application to Pasting}

  \begin{figure}[tb] 
  \centering
  \begin{tikzpicture}[scale=0.45]
\draw[draw=none]  (-1*1.75,-4) -- (-1*1.75,2)  node[pos=.5, sloped] {$2\N \times E^{+1}$};
\draw[draw=none]  (-1.75*1.75,0) -- (-1.75*1.75,6)  node[pos=.5, sloped] {$(2\N - 1) \times E^{-1}$};
\draw[right] node at (6.0*1.75,-3.5) {$\pi$};
\draw[fill=gray, fill opacity=0.5] (0*1.75,3) ellipse (1.5 and 3);
\draw[fill=blue, fill opacity=0.5] (1*1.75,0) ellipse (1.5 and 3);
\draw[fill=gray, fill opacity=0.5] (2*1.75,3) ellipse (1.5 and 3);
\draw[fill=blue, fill opacity=0.5] (3*1.75,0) ellipse (1.5 and 3);
\draw[fill=gray, fill opacity=0.5] (4*1.75,3) ellipse (1.5 and 3);
\draw[fill=blue, fill opacity=0.5] (5*1.75,0) ellipse (1.5 and 3);
\node at (6.5*1.75,3) {$\cdots$};
\node at (6.5*1.75,0) {$\cdots$};
\node (A) at (3*1.75,-4) {$\bigcup_n \big( \{n\} \times E^{(-1)^n} \big)$};
\node (B) at (9*1.75,-4) {$E^{-1} \cup E^{+1}$};
\draw[->]  (A) -- (B);
\draw[fill=gray, fill opacity=0.5] (9*1.75,3) ellipse (1.5 and 3);
\draw[fill=blue, fill opacity=0.5] (9*1.75,0) ellipse (1.5 and 3);
\end{tikzpicture}
  \caption[Pasting of two processes via concatenation of alternating independent copies]
           {Construction of the pasting of two subprocesses $X^{-1}$, $X^{+1}$ on $E^{-1}$, $E^{+1}$, via concatenation of alternating subprocess copies on 
              $(2\N -1) \times E^{-1}$, $2\N \times E^{+1}$ respectively, and subsequent projection onto $E^{-1} \cup E^{+1}$.} \label{fig:alternating copies}
 \end{figure}
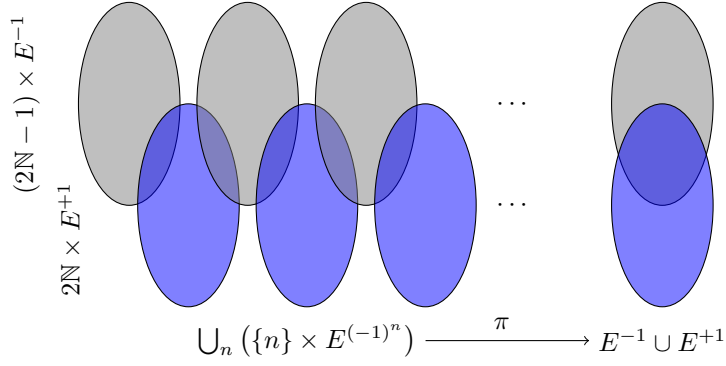

As described in section~\ref{sec:intro pasting}, we achieve the pasting of two right processes $X^{-1}$ and $X^{+1}$ on non-disjoint spaces $E^{-1}$ and $E^{+1}$
by introducing a counting coordinate, defining copies of the two processes on the disjoint spaces $\{n\} \times E^{(-1)^n}$, $n \in \N$, 
concatenating these processes to a process $X$ on $\N \times (E^{-1} \cup E^{+1})$, and then discarding the first coordinate by projecting to $\pi(X)$,
see~figure~\ref{fig:alternating copies}.
We now need to ensure that $\pi(X)$ is a right process.

\subsection{Mapping of the State Space}
In general, the state space transformation $\psi(X)$ of a (strong/right) Markov process~$X$ on a state space $E$ to a new state space $\hE$ via a surjective mapping 
$\psi \colon E \rightarrow \hE$ does not yield a (strong/right) Markov process. Heuristically speaking, the original process~$X$ needs to 
``behave identically''  on points of $E$ that are mapped together by $\psi$. A consistency condition with salvages the Markov property of $\psi(X)$ is 
found, e.g., in \cite[Theorem~10.13]{Dynkin65}, it reads
  \begin{align*}
    \forall B \in \hsE, x, x' \in E \text{ with } \psi(x) = \psi(x'):
      \quad \PV_x \big( X_t \in \psi^{-1}(B) \big) = \PV_{x'} \big( X_t \in \psi^{-1}(B) \big).
  \end{align*}
In the context of right processes the result is almost the same, flavored only by some measurability conditions.
It is found in \cite[Theorem (13.5)]{Sharpe88}:
\begin{theorem} \label{theo:mapping, standard}
  Let $X = \big( \Omega, \sG, (\sG_t, t \geq 0), (X_t, t \geq 0), (\Theta_t, t \geq 0), (\PV_x, x \in E) \big)$
  be a right process on a Radon space $E$ with semigroup $(T_t, t \geq 0)$ and resolvent $(U_\alpha, \alpha > 0)$.
  Let $(\hE, \hsE)$ be a Radon space and $\psi \colon E \rightarrow \hE$ be a mapping, satisfying the following conditions:
  \begin{enumerate}
    \item $\psi$ is $\sE/\hsE$-measurable and $\psi(E) = \hE$; \label{itm:mapping, standard i}
    \item $t \mapsto \psi(X_t)$ is a.s.\ right continuous in $\hE$; \label{itm:mapping, standard ii}
    \item for all $f \in b\cC_d(\hE)$ and all $t \geq 0$, there exists $g_t \in b\hsE$ such that  \label{itm:mapping, standard iii}
      \begin{align*} 
       T_t(f \circ \psi) = g_t \circ \psi.
      \end{align*}
  \end{enumerate}

  Define the transformed process $Y_t := \psi(X_t)$, $t \geq 0$, on 
  \begin{align*}
   \hO := \big\{\omega \in \Omega: t \mapsto \psi \big( X_t (\omega) \big) \text{ is right continuous in } \hE \big\},
  \end{align*}
  equipped with shift operators $\hT_t := \Theta_t$, $t \geq 0$, on $\hO$,
  and $\sigma$-algebras generated by~$Y$
  \begin{align*}
    \hsF^0 & := \sigma \big( \big\{ f(Y_t): f \in \hsE, t \geq 0 \big\} \big), \\
    \hsF^0_t & := \sigma \big( \big\{ f(Y_s): f \in \hsE, s \leq t \big\} \big), \quad t \geq 0,
  \end{align*} 
  and choose measures for $\hPV_y$, $y \in \hE$, by
  \begin{align} \label{eq:mapping, measure}
    \hPV_y & := \PV_x \text{ on $\hsF$}, \quad \text{for $x \in E$ with $\psi(x) = y \in \hE$}.
  \end{align}
  Furthermore, let $\hsF$, $(\hsF_t, t \geq 0)$ be the usual completion and augmentations of $\hsF^0$, $(\hsF^0_t, t \geq 0)$ respectively,
  relative to the family $(\hPV_y, y \in \hE)$.
  
  Then $Y = \big( \hO, \hsF, (\hsF_t)_{t \geq 0}, (Y_t)_{t \geq 0}, (\hT_t)_{t \geq 0}, (\hPV_y)_{y \in \hE} \big) =: \psi(X)$ is a right process on $\hE$.
\end{theorem}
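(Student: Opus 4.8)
The plan is to verify the two defining ingredients of a right process for $Y = \psi(X)$ in turn: that $Y$ is a normal Markov process with a well-defined transition semigroup, and that it satisfies the right-continuity condition~\eqref{eq:HD2}. The organising device is the intertwining relation $\hat{T}_t f \circ \psi = T_t(f \circ \psi)$: condition~\ref{itm:mapping, standard iii} supplies, for each $f \in b\cC_d(\hE)$, a function $g_t \in b\hsE$ with $T_t(f \circ \psi) = g_t \circ \psi$, and since $\psi(E) = \hE$ this $g_t$ is uniquely determined on $\hE$, so I may set $\hat{T}_t f := g_t$.

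First I would extend condition~\ref{itm:mapping, standard iii} from $b\cC_d(\hE)$ to all of $b\hsE$. The set of $f \in b\hsE$ for which such a $g_t$ exists is a vector space closed under bounded pointwise limits---dominated convergence propagates through $T_t$, and surjectivity of $\psi$ forces the limiting $g_t$ to be a genuine $\hsE$-measurable function on $\hE$---so a functional monotone class argument, using that $b\cC_d(\hE)$ generates $\hsE$, gives the extension. With the intertwining available on all of $b\hsE$, I would then show that the finite-dimensional distributions of $Y$ under $\PV_x$ depend on $x$ only through $y = \psi(x)$: arguing by induction on the number of time points and conditioning the last coordinate by the Markov property of $X$, each step rewrites $f(\psi(X_{t_k}))$ as $(\hat{T}_{t_k - t_{k-1}} f)(\psi(X_{t_{k-1}}))$ and thereby collapses onto a lower-dimensional marginal of $\psi(X)$. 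This renders $\hPV_y := \PV_x$ well-defined on $\hsF^0$ for every $y \in \hE$, and condition~\ref{itm:mapping, standard ii} ensures $\PV_x(\hO) = 1$, so restricting the paths to $\hO$ loses no mass.

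Granting this, the Markov structure follows quickly. Normality holds because $Y_0 = \psi(X_0) = y$ holds $\PV_x$-a.s., and $y \mapsto \hPV_y(Y_t \in B) = (\hat{T}_t \1_B)(y)$ lies in $b\hsE$. For the Markov property relative to the natural filtration $(\hsF_t, t \geq 0)$ it suffices to test against products $\prod_i h_i(Y_{s_i})$ with $s_i \leq s$; applying the Markov property of $X$ at time $s$ and the intertwining turns the future factor $f(Y_{s+t})$ into $(\hat{T}_t f)(Y_s)$, as required. The condition~\eqref{eq:HD2} I would obtain by pulling excessive functions back along $\psi$: setting $\hat{U}_\alpha f \circ \psi := U_\alpha(f \circ \psi)$, if $u$ is $\alpha$-excessive for $Y$ then $e^{-\alpha t} T_t(u \circ \psi) = e^{-\alpha t}(\hat{T}_t u) \circ \psi \uparrow u \circ \psi$ as $t \downarrow 0$, so $u \circ \psi \in \sS_\alpha$. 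Since $X$ satisfies~\eqref{eq:HD2}, the map $t \mapsto (u \circ \psi)(X_t) = u(Y_t)$ is a.s.\ right continuous under every $\PV_x$, and as each $y \in \hE$ is $\psi(x)$ for some $x$, this is exactly~\eqref{eq:HD2} for $Y$; the augmentation and right-continuity of $(\hsF_t)$ and the right-continuity of the paths are furnished by the construction and condition~\ref{itm:mapping, standard ii}.

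I expect the main obstacle to be the well-definedness step: extending~\ref{itm:mapping, standard iii} to all bounded universally measurable functions---where the passage from the Borel to the completed $\sigma$-algebra $\hsE$ needs an approximation argument---and then propagating fibre-constancy through the iterated conditioning, since this is where surjectivity, the measurability hypotheses, and the Markov property of $X$ must be combined. By comparison, once the intertwining is in hand the verification of~\eqref{eq:HD2} is essentially immediate.
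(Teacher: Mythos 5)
Your proposal is correct and follows essentially the route of the paper's source: the paper does not prove this theorem itself but quotes it from \cite[Theorem (13.5)]{Sharpe88}, and your argument---the intertwining $\hat{T}_t f \circ \psi = T_t(f \circ \psi)$, the monotone-class/completion extension of condition (iii) to all of $b\hsE$, the fibre-constancy of the finite-dimensional distributions making $\hPV_y$ well-defined, and the pullback $u \circ \psi \in \sS_\alpha$ of $Y$-excessive functions to transfer \eqref{eq:HD2} from $X$ to $Y$---is exactly the standard proof there and matches the paper's accompanying remarks (including the representative-independence computation $\hEV_y(f(Y_t)) = g_t(y)$ and the appeal to \cite[Remarks (13.6)]{Sharpe88} for the extension step). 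No gaps beyond the completion argument you already flag, which is handled precisely as you anticipate.
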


As usual, property \ref{itm:mapping, standard iii} can be extended to all functions $f \in b\hsE$ by using the monotone class theorem and standard completion arguments 
(see \cite[Remarks~(13.6)]{Sharpe88}).
Because of this property, the definition of the measures~$\PV_y$ on~$\hsF$ in \eqref{eq:mapping, measure} 
is independent of the representatives chosen for $y = \psi(x)$, $x \in E$:  For any $f \in b\hsE$, $t \geq 0$, we have
 \begin{align*}
  \hEV_y \big( f(Y_t) \big)
  = \EV_x \big( f \big( \psi(X_t) \big) \big) 
  = T_t ( f \circ \psi ) (x)
  = g_t \circ \psi(x)
  = g_t (y).
 \end{align*}

Typically, the fundamental condition \ref{itm:mapping, standard iii} must be verified manually.
There is a Laplace-transformed version of this condition, which sometimes is easier to control, and which is more suitable in our context:

\begin{theorem} \label{theo:mapping, resolvent condition}
 In Theorem~\ref{theo:mapping, standard}, under \ref{itm:mapping, standard i} and \ref{itm:mapping, standard ii},
 condition \ref{itm:mapping, standard iii} is equivalent to
  \begin{enumerate}[label=(iii'), ref=(iii')] 
    \item for all $f \in b\cC_d(\hE)$ and all $\alpha > 0$, there exists $f_\alpha \in b\hsE$ such that  \label{itm:mapping, standard iiii}
        \begin{align*}
          U_\alpha(f \circ \psi) = f_\alpha \circ \psi.
        \end{align*}
  \end{enumerate}
\end{theorem}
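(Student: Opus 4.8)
The plan is to treat both conditions as statements that a bounded function on $E$ factors measurably through $\psi$, and to move between the semigroup and the resolvent by the Laplace transform $U_\alpha h = \int_0^\infty e^{-\alpha t}\,T_t h\,dt$, $h \in b\sE$. Since $\psi$ is $\sE/\hsE$-measurable, $f\circ\psi \in b\sE$ whenever $f \in b\hsE$, so $T_t(f\circ\psi)$ and $U_\alpha(f\circ\psi)$ are genuine bounded $\sE$-measurable functions on $E$. For a surjective measurable map into a Radon space one has a (universally) measurable section $\sigma\colon \hE \to E$ with $\psi\circ\sigma = \mathrm{id}_{\hE}$; consequently a bounded $\sE$-measurable function $h$ factors as $h = g\circ\psi$ with $g \in b\hsE$ if and only if $h$ is constant on the fibres $\psi^{-1}(\{y\})$, in which case one may take $g = h\circ\sigma$. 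I would record this reformulation first, since it reduces both \ref{itm:mapping, standard iii} and \ref{itm:mapping, standard iiii} to fibre-wise constancy.

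For the implication \ref{itm:mapping, standard iii}$\Rightarrow$\ref{itm:mapping, standard iiii}, I would simply integrate. The map $(t,x)\mapsto T_t(f\circ\psi)(x)$ is jointly measurable by right continuity of $X$, and composing with the section gives a jointly measurable representative $(t,y)\mapsto T_t(f\circ\psi)(\sigma(y))$ which, by \ref{itm:mapping, standard iii}, agrees with $g_t$ on $\psi(E) = \hE$. Setting
\[ f_\alpha(y) := \int_0^\infty e^{-\alpha t}\,T_t(f\circ\psi)(\sigma(y))\,dt, \]
Tonelli's theorem makes $f_\alpha \in b\hsE$ with $\norm{f_\alpha} \leq \norm{f}/\alpha$, and fibre-wise constancy yields $f_\alpha\circ\psi = \int_0^\infty e^{-\alpha t}\,T_t(f\circ\psi)\,dt = U_\alpha(f\circ\psi)$, which is \ref{itm:mapping, standard iiii}.

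The substantive direction is \ref{itm:mapping, standard iiii}$\Rightarrow$\ref{itm:mapping, standard iii}, and here the plan is a Laplace-uniqueness argument upgraded from almost-everywhere to everywhere by right continuity. Fix $f \in b\cC_d(\hE)$ and two points $x, x' \in E$ with $\psi(x) = \psi(x')$, and put $\phi(t) := T_t(f\circ\psi)(x) - T_t(f\circ\psi)(x')$. Assumption \ref{itm:mapping, standard iiii} gives, for every $\alpha > 0$,
\[ \int_0^\infty e^{-\alpha t}\,\phi(t)\,dt = U_\alpha(f\circ\psi)(x) - U_\alpha(f\circ\psi)(x') = f_\alpha(\psi(x)) - f_\alpha(\psi(x')) = 0, \]
so by uniqueness of the Laplace transform $\phi = 0$ for Lebesgue-almost every $t \geq 0$. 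To pass to all $t$ I would use that $t \mapsto T_t(f\circ\psi)(x) = \EV_x\big(f(\psi(X_t))\big)$ is right continuous: by \ref{itm:mapping, standard ii} the path $s\mapsto \psi(X_s)$ is a.s.\ right continuous, $f$ is bounded and continuous, so $f(\psi(X_{t_n})) \to f(\psi(X_t))$ boundedly as $t_n \downarrow t$, and dominated convergence gives the claim. Hence $\phi$ is right continuous and vanishes a.e.; choosing $t_n \downarrow t$ with $\phi(t_n) = 0$ (possible since the zero set has full measure) yields $\phi(t) = \lim_n \phi(t_n) = 0$ for every $t$. Thus for each fixed $t$ the function $T_t(f\circ\psi)$ is constant on the fibres of $\psi$, and the reformulation of the first paragraph produces $g_t := T_t(f\circ\psi)\circ\sigma \in b\hsE$ with $T_t(f\circ\psi) = g_t\circ\psi$, which is \ref{itm:mapping, standard iii}.

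I expect the main obstacle to be precisely this last upgrade: Laplace uniqueness only controls $T_t(f\circ\psi)$ for almost every $t$, and the whole point of restricting to $f \in b\cC_d(\hE)$ rather than $f \in b\hsE$, together with the right-continuity hypothesis \ref{itm:mapping, standard ii}, is to make $t \mapsto T_t(f\circ\psi)$ right continuous so that the exceptional null set can be removed. The remaining measurability points—joint measurability of the semigroup and the existence of the measurable section $\sigma$—are routine for right processes on Radon spaces, and I would relegate them to a remark.
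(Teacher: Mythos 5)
Your analytic core in the hard direction is correct and is essentially the same as the paper's: for $x,x'$ with $\psi(x)=\psi(x')$, condition \ref{itm:mapping, standard iiii} kills the Laplace transform of $\phi(t)=T_t(f\circ\psi)(x)-T_t(f\circ\psi)(x')$, and the right continuity of $t\mapsto T_t(f\circ\psi)(x)$ — available precisely because $f\in b\cC_d(\hE)$ and \ref{itm:mapping, standard ii} holds — upgrades $\phi=0$ from a.e.\ to everywhere. The genuine gap is the step you relegate to a remark: the existence of a measurable section $\sigma\colon\hE\to E$, equivalently the claim that a fibre-constant $h\in b\sE$ factors as $g\circ\psi$ with $g\in b\hsE$. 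In the generality of Theorem~\ref{theo:mapping, standard}, $E$ and $\hE$ are merely Radon spaces, $\sE$ and $\hsE$ are the \emph{universal completions} of the Borel $\sigma$-algebras, and $\psi$ is only $\sE/\hsE$-measurable. Hence the graph of $\psi$ need not be analytic (nor Borel), so neither the Jankov--von Neumann selection theorem nor Kond\^o uniformization applies, and it is not a theorem of ZFC that an arbitrary universally measurable surjection admits a universally measurable section. The standard section-free repair also fails here: one would like to set $g(y):=h(x)$ for $x\in\psi^{-1}(y)$ and deduce measurability from $g^{-1}(B)=\psi\bigl(h^{-1}(B)\bigr)$ together with Souslin's theorem applied to the complementary analytic images, but forward images of universally measurable sets under a merely universally measurable map need not be universally measurable. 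So fibre-wise constancy alone does not deliver the measurability of $g_t$ — and that measurability is exactly what \ref{itm:mapping, standard iii} asserts.

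The paper's proof is engineered to avoid this obstruction: instead of invoking Laplace \emph{uniqueness} and then hunting for a measurable factor, it \emph{inverts} the Laplace transform explicitly. A Post--Widder-type formula gives, at $y=\psi(x)$,
\begin{align*}
 T_t(f\circ\psi)(x)
 = \lim_{\varepsilon\downarrow 0}\lim_{\alpha\to\infty}\frac{1}{\varepsilon}
   \sum_{\alpha t < k \leq (\alpha+\varepsilon)t}\frac{(-1)^k}{k!}\,\alpha^k\, f_\alpha^{(k)}(y) =: g_t(y),
\end{align*}
and since each $f_\alpha\in b\hsE$ and the derivatives $f_\alpha^{(k)}$ are pointwise limits of difference quotients, $g_t$ is exhibited directly as an iterated pointwise limit of $\hsE$-measurable functions: fibre-constancy and measurability are proved in one stroke. (The right continuity of $t\mapsto T_t(f\circ\psi)(x)$, which you also use, enters there to justify the inversion formula for bounded right-continuous functions rather than Feller's probability-measure case.) To salvage your route you must either strengthen the hypotheses so that a section theorem applies (e.g., Lusin state spaces and Borel $\psi$, where Jankov--von Neumann yields a universally measurable section), or replace the section argument by such an explicit inversion; your first direction, by contrast, needs no section at all — $f_\alpha(y)=\int_0^\infty e^{-\alpha t}g_t(y)\,dt$ is $\hsE$-measurable via Riemann-sum approximation, using that $t\mapsto g_t(y)=T_t(f\circ\psi)(x)$ is right continuous for any $x$ in the fibre over $y$.
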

\begin{proof}
 Assume that \ref{itm:mapping, standard i}, \ref{itm:mapping, standard ii} and \ref{itm:mapping, standard iii} hold.
 Then for $f \in b\cC_d(\hE)$, $\alpha > 0$, $x \in E_\Delta$,
  \begin{align*}
    U_\alpha(f \circ \psi) (x)
    & = \int_0^\infty e^{-\alpha t} \, T_t (f \circ \psi)(x) \, dt 
     = f_\alpha \circ \psi (x)
  \end{align*}
 holds with $f_\alpha := \int_0^\infty e^{-\alpha t} \, g_t \, dt \in b\hsE$ for $g_t \in b\hsE$ as given by \ref{itm:mapping, standard iii}.
 
 Now assume that \ref{itm:mapping, standard i}, \ref{itm:mapping, standard ii} and \ref{itm:mapping, standard iiii} hold.
 Let $f \in b\cC_d(\hE)$ and consider for every $\alpha > 0$ the function $f_\alpha \in b\hsE$ as given by \ref{itm:mapping, standard iiii} with $U_\alpha(f \circ \psi) = f_\alpha \circ \psi$.
 For $t = 0$, the function $g_0 = f$ satisfies $T_0(f \circ \psi) = g_0 \circ \psi$.
 For $t > 0$, we need to invert the Laplace transform, which is encoded in $(f_\alpha, \alpha > 0)$. 
 We first observe that $f_\alpha^{(k)} := \frac{\partial^k}{\partial \alpha^k} f_\alpha$ exists for all $k \in \N_0$,
 because for each $y \in \hE$, there is $x \in E$ with $\psi(x) = y$, 
 so
   \begin{align*}
     f_\alpha(y) = f_\alpha \big( \psi(x) \big) =  U_\alpha (f \circ \psi)(x)
   \end{align*}
 holds and $\alpha \mapsto U_\alpha (f \circ \psi)(x)$ is in $\cC^\infty(\R_{>0})$ (see \cite[Theorem XII.20]{DellacherieMeyerC}).
 Furthermore, for any $x \in E$, the function
   \begin{align*}
     t \mapsto T_t( f \circ \psi ) (x) = \EV_x \big( f \big( \psi(X_t) \big) \big)
   \end{align*}
 is a bounded and right continuous, as $f$ is bounded and continuous and $t \mapsto \psi(X_t)$ is right continuous by \ref{itm:mapping, standard ii}.
 Let $y \in \hE$, and choose any $x \in E$ with $\psi(x) = y$. 
 Then a general inversion formula\footnote{The inversion formula 
  \begin{align*}
   g(t) = \lim_{\varepsilon \downdownarrows 0} \lim_{\alpha \rightarrow \infty} \frac{1}{\varepsilon} \sum_{\alpha t < k \leq (\alpha + \varepsilon)t} \frac{(-1)^k}{k!} \, \alpha^k \, \varphi^{(k)}(\alpha), \quad t > 0,
  \end{align*}
 for the Laplace transform $\varphi(\alpha) = \int_0^\infty e^{-\alpha t} \, g(t) \, dt$ of a right continuous, bounded function $g \colon \R_+ \rightarrow \R$
 is given in \cite[Formula (4.14)]{Sharpe88} as part of an exercise with a reference to \cite[p.~232]{Feller71}. However, Feller only considers Laplace transforms
 of probability measures; in the general case the justification of the interchange of limits and integration, which is essential to Feller's proof, is more difficult and 
 can be found in \cite[Section~1.4]{Werner16}.} 
 for the Laplace transform of $t \mapsto T_t( f \circ \psi )$ yields
     \begin{align*}
     T_t( f \circ \psi ) (x)
     & = \lim_{\varepsilon \downdownarrows 0} \lim_{\alpha \rightarrow \infty} \frac{1}{\varepsilon} \sum_{\alpha t < k \leq (\alpha + \varepsilon)t} \frac{(-1)^k}{k!} \, \alpha^k \, U_\alpha^{(k)} (f \circ \psi)(x) \\
     & = \lim_{\varepsilon \downdownarrows 0} \lim_{\alpha \rightarrow \infty} \frac{1}{\varepsilon} \sum_{\alpha t < k \leq (\alpha + \varepsilon)t} \frac{(-1)^k}{k!} \, \alpha^k \, f_\alpha^{(k)}(y) \\
     & =: g_t(y) = g_t \circ \psi(x),
   \end{align*} 
 with the function $g_t \colon \hE \rightarrow \R$ as defined above being bounded as $\norm{g_t} = \norm{T_t ( f \circ \psi )}$ 
 and measurable due to the measurability of all $f^{(k)}_\alpha$, $\alpha > 0$, $k \in \N_0$.
\end{proof}

\subsection{Alternating Copies of Two Processes}
Let $X^{-1}$, $X^{+1}$ be two right processes with lifetimes $\zeta^{-1}$, $\zeta^{+1}$ on $E^{-1}$, $E^{+1}$ respectively, 
and $K^{-1}$, $K^{+1}$ be transfer kernels from $X^{-1}$ to $(X^{+1}, E^{+1})$ and from $X^{+1}$ to $(X^{-1}, E^{-1})$. 
Let $X$ be the concatenation, as described in section~\ref{sec:intro pasting}, of
 \begin{align*}
  X^n := \{n\} \times X^{(-1)^n}, \quad K^n := \delta_{n+1} \otimes K^{(-1)^n}, \quad n \in \N,
 \end{align*}
which by Theorem~\ref{theo:concatenation countable} is a right process on $\tE := \bigcup_n \{n\} \times E^{(-1)^n}$. 
Let $\pi \colon \tE \rightarrow E$, with $E := E^{-1} \cup E^{+1}$, be the projection onto the second coordinate.
We check the consistency conditions of Theorem~\ref{theo:mapping, resolvent condition}
to prove that the pasted process $\pi(X)$ is a right process on $E$.

\begin{proof}[Proof of Theorem~\ref{theo:alternating iterations}] 
 $\pi$ is clearly surjective. It is $\tsE / \sE$-measurable, as the preimage of $\pi$ reads
  \begin{align*}
   \pi^{-1}(B) = \big( (2 \N - 1) \times (B \cap E^{-1}) \big) \cup \big(2 \N \times (B \cap E^{+1}) \big), \quad B \in \sE.
  \end{align*}
 The right process $X$ is right continuous and the projection $\pi$ is continuous, so $\pi(X)$ is right continuous as well.
 By Theorem~\ref{theo:mapping, resolvent condition},
 it therefore suffices to prove that for all $\alpha > 0$, $f \in b\sE$, there exists $f_\alpha \in b\sE$ such that 
   $U_\alpha(f \circ \pi) = f_\alpha \circ \pi$ 
 holds true. As the process $X$ is constructed of alternating copies,
 we look at cycles of two revivals, that is, we examine for $(n,x) \in \tE$:
   \begin{align*}
    U_\alpha(f \circ \pi) \, (n,x)
     & = \sum_{m=0}^\infty \EV_{(n,x)} \Big( \1_{\{R^{n+2m-1} < \infty\}} \, \int_{R^{n+2m-1}}^{R^{n+2m+1}} e^{-\alpha t} \, f \circ \pi(X_t) \, dt \Big).
   \end{align*}  
   
 For $m = 0$, we decompose the partial resolvent at the revival time $R^{n}$ and obtain by employing the terminal time property of~$R^{n+1}$, 
 the strong Markov property of~$X$ at~$R^n$, and the revival formula of Theorem~\ref{theo:concatenation countable}:
   \begin{align*}
     & \EV_{(n,x)} \Big( \int_0^{R^{n+1}} e^{-\alpha t} \, f \circ \pi(X_t) \, dt \Big) \\
      & = \EV^{(-1)^n}_x \Big( \int_0^{\zeta^{(-1)^n}} e^{-\alpha t} \, f \big( X^{(-1)^n}_t \big) \, dt \Big) \\
      & \quad + \EV^{(-1)^n}_x \Big( \1_{\{\zeta^{(-1)^n} < \infty\}} \, e^{-\alpha \zeta^{(-1)^n}} \\
      & \quad \hspace*{5em} K^{(-1)^n} \EV^{(-1)^{n+1}}_{\, \cdot \,} \Big( \int_0^{\zeta^{(-1)^{n+1}}} e^{-\alpha t} \, f \big( X^{(-1)^{n+1}}_t \big) \, dt \Big) \Big) \\
      & =: g^{(-1)^n}_0(x).
   \end{align*}  
   
 For general $m \in \N_0$, we will show inductively that
   \begin{align} \label{eq:pasting map projection}
    \EV_{(n,x)} \Big( \1_{\{R^{n+2m-1} < \infty\}} \, \int_{R^{n+2m-1}}^{R^{n+2m+1}} e^{-\alpha t} \, f \circ \pi(X_t) \, dt \Big) = g^{(-1)^n}_m(x)
   \end{align}
 holds with $g^{-1}_m \in b\sE^{-1}$, $g^{+1}_m \in b\sE^{+1}$ being independent of $n \in \N$. The case $m = 0$ is already done.
 Assuming that \eqref{eq:pasting map projection} is proved for an $m \in \N_0$,
 we calculate for $m+1$, by using the same course of actions as above, as well as the definitions of the transfer kernels $K^n$:
    \begin{align*}
    & \EV_{(n,x)} \Big( \1_{\{R^{n+2(m+1)-1} < \infty\}} \, \int_{R^{n+2(m+1)-1}}^{R^{n+2(m+1)+1}} e^{-\alpha t} \, f \circ \pi(X_t) \, dt \Big) \\
    & = \EV_{(n,x)} \Big( \1_{\{R^{n} < \infty\}}  \, e^{-\alpha R^n} \, K^n \EV_{\, \cdot \,} \Big( \1_{\{R^{n+1} < \infty\}} \, e^{-\alpha R^{n+1}} \\
    & \hspace{4.6em} K^{n+1} \EV_{\, \cdot \,} \Big( \1_{\{R^{n+2m+1} < \infty\}} \, \int_{R^{n+2m+1}}^{R^{n+2m+3}} e^{-\alpha t} \, f \circ \pi(X_t) \, dt \Big) \circ \pi^{n+1} \Big) \circ \pi^n \Big)  \\
    & = \EV^{(-1)^n}_x \Big( \1_{\{\zeta^{(-1)^n} < \infty\}} \, e^{-\alpha \zeta^{(-1)^n}} \, K^{(-1)^n} \EV^{(-1)^{n+1}}_{\, \cdot \,} \Big( \1_{\{\zeta^{(-1)^{n+1}} < \infty\}} \, e^{-\alpha \zeta^{(-1)^{n+1}}} \\
    & \hspace{5.8em} K^{(-1)^{n+1}} \, \EV_{(n+2, \, \cdot \,)} \Big(\1_{\{R^{n+2m+1} < \infty\}} \, \int_{R^{n+2m+1}}^{R^{n+2m+3}} e^{-\alpha t} \, f \circ \pi(X_t) \, dt \Big) \Big) \Big) 
   \end{align*}
 Next, using the inductive assumption \eqref{eq:pasting map projection} and that $g^{(-1)^{n+2}}_m =  g^{(-1)^{n}}_m$, we get
   \begin{align*}
    & \EV_{(n,x)} \Big( \1_{\{R^{n+2(m+1)-1} < \infty\}} \, \int_{R^{n+2(m+1)-1}}^{R^{n+2(m+1)+1}} e^{-\alpha t} \, f \circ \pi(X_t) \, dt \Big) \\
    & = \EV^{(-1)^n}_x \Big( \1_{\{\zeta^{(-1)^n} < \infty\}} \, e^{-\alpha \zeta^{(-1)^n}} \, K^{(-1)^n} \EV^{(-1)^{n+1}}_{\, \cdot \,} \Big( \1_{\{\zeta^{(-1)^{n+1}} < \infty\}} \, e^{-\alpha \zeta^{(-1)^{n+1}}} \\
    & \hspace{26.2em}  K^{(-1)^{n+1}} \, g^{(-1)^{n}}_m \Big) \Big) \\
    & =: g^{(-1)^n}_{m+1}(x).
   \end{align*}
  
  Setting $g^{-1} := \sum_{m = 0}^\infty g^{-1}_m \in b\sE^{-1}$ and $g^{+1} := \sum_{m = 0}^\infty g^{+1}_m \in b\sE^{+1}$, we have proven that
  \begin{align*}
   U_\alpha(f \circ \pi) \, (n,x)
   = \begin{cases}
      g^{-1}(x), & \text{$n$ odd-numbered}, \\
      g^{+1}(x), & \text{$n$ even-numbered}
     \end{cases}
  \end{align*}
  holds for all $(n,x) \in \tE$, so the value of the resolvent $U_\alpha(f \circ \pi) \, (n,x)$ is independent of $n$ for all odd-numbered $n$, and for all even-numbered $n$.
  
  It remains to prove $g^{-1} = g^{+1}$ on $E^{-1} \cap E^{+1}$, which is equivalent to 
   \begin{align*} U_\alpha(f \circ \pi) \, (n_o,x) = U_\alpha(f \circ \pi) \, (n_e,x) \end{align*}
  for all $n_o \in (2 \N -1)$, $n_e \in 2 \N$, $x \in E^{-1} \cap E^{+1}$
  (because $(n_0,x) \notin E$ for $x \in E^{+1} \backslash E^{-1}$, and $(n_e,x) \notin E$ for $x \in E^{-1} \backslash E^{+1}$).
   
  Let $\tau_{-1}$ be the first entry time of $\pi(X)$ into $E^{-1} \backslash E^{+1}$, and $\tau_{+1}$ be the first entry time of $\pi(X)$ into $E^{+1} \backslash E^{-1}$.
  We synchronize the start of both processes by decomposing at the stopping time $\tau_{-1} \wedge \tau_{+1}$
  with Dynkin's formula \eqref{eq:Dynkins formula (resolvent)}:
  \begin{align*}
   U_\alpha(f \circ \pi) \, (n,x) 
   & = \EV_{(n,x)} \Big( \int_0^{\tau_{-1} \wedge \tau_{+1}} e^{-\alpha t} \, f \circ \pi(X_t) \, dt \Big) \\
   & ~ + \EV_{(n,x)} \big( e^{-\alpha (\tau_{-1} \wedge \tau_{+1})} \, U_\alpha (f \circ \pi) (X_{\tau_{-1} \wedge \tau_{+1}}) \big).
  \end{align*}
  $\tau_{-1} \wedge \tau_{+1}$ is the exit time of the process $X$ from $E^{-1} \cap E^{+1}$.
  The above formula will turn out to be independent of $n$ if
  the process' behavior on $E^{-1} \cap E^{+1}$ and its exit/entry behavior into $E \backslash (E^{-1} \cap E^{+1})$
  (represented by $e^{-\alpha (\tau_{-1} \wedge \tau_{+1})}$ and $X_{\tau_{-1} \wedge \tau_{+1}}$) are independent of $n$. 
  It has already been shown that this is the case for all odd-numbered $n$, and for all even-numbered $n$. 
  It remains to compare the odd-numbered and even-numbered starting processes, that is,
  the behavior of the original processes $X^{-1}$ and $X^{+1}$ together with the transfer kernels $K^{-1}$ and $K^{+1}$:
  
  For odd-numbered $n_o \in (2 \N -1)$, the starting process is $X^{(-1)^{n_o}} = X^{-1}$, living on~$E^{-1}$,
  so the process $\pi(X)$ starting at $(n_o, x)$ only enters $E^{+1} \backslash E^{-1}$ when the first subprocess dies. 
  Therefore,
  $\tau_{-1} \wedge \tau_{+1} = \tau_{-1} \wedge R^{n_o}$ holds true in this case, and using Dynkin's formula~\eqref{eq:Dynkins formula (resolvent)} again, we get
  \begin{align*}
   U_\alpha(f \circ \pi) \, (n_o, x)
   & = \EV_{(n_o, x)} \Big( \int_0^{\tau_{-1} \wedge R^{n_o}} e^{-\alpha t} \, f \circ \pi (X_t) \, dt \Big) \\
   & \quad   + \EV_{(n_o, x)} \big( e^{-\alpha \tau_{-1}} \, U_\alpha(f \circ \pi)(X_{\tau_{-1}}); \, \tau_{-1} < R^{n_o} \big) \\
   & \quad   + \EV_{(n_o, x)} \big( e^{-\alpha R^{n_0}} \, U_\alpha(f \circ \pi)(X_{R^{n_o}}); \, R^{n_o} \leq \tau_{-1} \big),
  \end{align*}
  where $R^{n_o} \leq \tau_{-1}$ can be replaced by $R^{n_o} < \tau_{-1}$, as equality only occurs if $R^{n_o} = \infty$.
  
  We have, $\PV_{(n_o, x)}$-a.s., $X_t = \big( n_o, X^{(-1)^{n_o}}_t \circ \pi^{n_o} \big)$ for all $t < R^{n_o} = \zeta^{(-1)^{n_o}} \circ \pi^{n_o} = \zeta^{-1} \circ \pi^{n_o}$,
  and $\tau^{-1}_{-1} \circ \pi^{n_o} < \zeta^{-1} \circ \pi^{n_o}$ if and only if $\tau_{-1} < R^{n_o}$, and in this case $\tau_{-1} = \tau^{-1}_{-1} \circ \pi^{n_o}$ holds true.
  Thus, the first part of the above decomposition reads
  \begin{align*}
   & \EV_{(n_o, x)} \Big( \int_0^{\tau_{-1} \wedge R^{n_o}} e^{-\alpha t} \, f \circ \pi (X_t) \, dt \Big) & \\
   & =       \EV_{(n_o, x)} \Big( \Big( \int_0^{\tau^{-1}_{-1}} e^{-\alpha t} \, f(X^{-1}_t) \, dt \Big) \circ \pi^{n_o} ; \, \tau_{-1} < R^{n_o} \Big) & \\
   & \quad + \EV_{(n_o, x)} \Big( \Big( \int_0^{\zeta^{-1}} e^{-\alpha t} \, f(X^{-1}_t) \, dt \Big)      \circ \pi^{n_o} ; \, R^{n_o} \leq \tau_{-1} \Big). & 
  \end{align*}
 As $f(X^{-1}_t) = f(\Delta) = 0$ for all $t > \zeta^{-1}$, we can replace the upper limit of the latter integration by $\tau^{-1}_{-1} \geq \zeta^{-1}$,
 in order to obtain
   \begin{align*}
   \EV_{(n_o, x)} \Big( \int_0^{\tau_{-1} \wedge R^{n_o}} e^{-\alpha t} \, f \circ \pi (X_t) \, dt \Big) 
    =       \EV_{(n_o, x)} \Big( \Big( \int_0^{\tau^{-1}_{-1}} e^{-\alpha t} \, f(X^{-1}_t) \, dt \Big) \circ \pi^{n_o} \Big). &
  \end{align*}
 Together with the process transfer at $R^{n_o}$ via $K^{(-1)^{n_o}} = K^{-1}$, and recalling that we already showed
 $U_\alpha(f \circ \pi) \, (n_o, \, \cdot \,) = g^{-1}$ and $U_\alpha(f \circ \pi) \, (n_o + 1, \, \cdot \,) = g^{+1}$, we get
\begin{equation} \label{eq:alternating copies, decomp1}
\begin{aligned}
   U_\alpha(f \circ \pi) \, (n_o, x)
   & = \EV^{-1}_x \Big( \int_0^{\tau^{-1}_{-1}} e^{-\alpha t} \, f(X^{-1}_t) \, dt \Big) \\
   & \quad   + \EV^{-1}_x \big( e^{-\alpha \tau^{-1}_{-1}} \, g^{-1}(X^{-1}_{\tau^{-1}_{-1}}); \, \tau^{-1}_{-1} < \zeta^{-1} \big) \\
   & \quad   + \EV^{-1}_x \big( e^{-\alpha \zeta^{-1}} \, K^{-1} g^{+1}; \, \zeta^{-1} < \tau^{-1}_{-1} \big). 
\end{aligned}
\end{equation}
  
 Analogously, we find that for any even-numbered $n_e \in 2 \N$,
\begin{equation} \label{eq:alternating copies, decomp2}
\begin{aligned} 
   U_\alpha(f \circ \pi) (n_e, x)
   & = \EV^{+1}_x \Big( \int_0^{\tau^{+1}_{+1}} e^{-\alpha t} \, f(X^{+1}_t) \, dt \Big) \\
   & \quad   + \EV^{+1}_x \big( e^{-\alpha \tau^{+1}_{+1}} \,g^{+1}(X^{+1}_{\tau^{+1}_{+1}}); \, \tau^{+1}_{+1} < \zeta^{+1} \big) \\
   & \quad   + \EV^{+1}_x \big( e^{-\alpha \zeta^{+1}} \, K^{+1} g^{-1}; \, \zeta^{+1} < \tau^{+1}_{+1} \big)
\end{aligned}
\end{equation}
 holds. Using the assumptions \ref{itm:alternating iterations  i} and \ref{itm:alternating iterations  ii} of the theorem, we conclude that
  \begin{align*}
   U_\alpha(f \circ \pi) \, (n_o, x) & = U_\alpha(f \circ \pi) \, (n_e, x),
  \end{align*}
 proving $U_\alpha(f \circ \pi) \, (n, x) = g^{\pm 1} \circ \pi(x)$ for all $x \in E$, $n \in \N$.
\end{proof}

\begin{proof}[Proof of Theorem~\ref{theo:identical iterations}]
 In case $X^{-1} = X^{+1}$ and $K^{-1} = K^{+1}$, each one of the summands of the decomposition~\eqref{eq:alternating copies, decomp1} 
 is equal to the corresponding summand of~\eqref{eq:alternating copies, decomp2}.
 Again, this yields $g^{-1} = g^{+1}$ and $U_\alpha(f \circ \pi) \, (n, x) = g^{\pm 1} \circ \pi(x)$ for all $x \in E$, $n \in \N$.
\end{proof}

\section*{Acknowledgements}
The main parts of this paper were developed during the author's Ph.D.\ thesis~\cite{Werner16} supervised by Prof.~J\"urgen~Potthoff, whose constant support the 
author gratefully acknowledges.

\bibliographystyle{amsplain}

\bibliography{concat_eps}

\end{document}